\newcommand\tabcaption{\def\@captype{table}\caption}
\newtheorem{theorem}{Theorem}[section]
\newtheorem{conj}[theorem]{Conjecture}
\newtheorem{lemma}[theorem]{Lemma}
\newtheorem{claim}{Claim}
\theoremstyle{definition}
\title{\Large \bf Note on rainbow cycles in edge-colored graphs}
\date{}
\author{{\small Xiaozheng Chen, Xueliang Li}\\
{\small Center for Combinatorics and LPMC}\\
{\small Nankai University, Tianjin 300071, China}\\
{\small Email: cxz@mail.nankai.edu.cn, lxl@nankai.edu.cn}
}
\begin{document}

\maketitle

\begin{abstract}
Let $G$ be a graph of order $n$ with an edge-coloring $c$,
and let $\delta^c(G)$ denote the minimum color degree of $G$.
A subgraph $F$ of $G$ is called rainbow if all edges of $F$
have pairwise distinct colors. There have been a lot results on
rainbow cycles of edge-colored graphs. In this paper, we show that
(i) if $\delta^c(G)>\frac{3n-3}{4}$,
then every vertex of $G$ is contained in a rainbow triangle;
(ii) $\delta^c(G)>\frac{3n}{4}$,
then every vertex of $G$ is contained in a rainbow $C_4$;
and (iii) if $G$ is complete, $n\geq 8k-18$ and $\delta^c(G)>\frac{n-1}{2}+k$,
then $G$ contains a rainbow cycle of length at least $k$. Some gaps in
previous publications are also found and corrected. \\[3mm]
{\bf Keywords:} edge-coloring; edge-colored graph; rainbow cycle; color-degree condition\\[3mm]
{\bf AMS Classification 2020:} 05C15, 05C38.

\end{abstract}

\section{Introduction}

We consider finite simple undirected graphs.
An edge-coloring of a graph $G$ is a mapping $c: E(G)\rightarrow \mathbb{N}$,
where $\mathbb{N}$ is the set of natural numbers.
A graph $G$ is called an \emph{edge-colored graph}
if $G$ is assigned an edge-coloring.
The color of an edge $e$ of $G$ and the set of colors assigned to $E(G)$
are denoted by $c(e)$ and $C(G)$, respectively.
For $V_1,V_2\subset V(G)$ and $V_1\cap V_2=\emptyset$,
we set $E(V_1,V_2)=\{xy\in E(G),x\in V_1,y\in V_2\}$,
and when $V_1=\{v\}$,
we write $E(u,V_2)$ for $E(\{u\},V_2)$.
The set of colors appearing on the edges between $V_1$ and $V_2$ in $G$
is denoted by $C(V_1,V_2)$.
When $V_1=\{v\}$, use $C(v,V_2)$ instead of $C(\{v\},V_2)$.
For a subgraph $T$ of $G$, the set of colors appearing on $E(T)$ is denoted by $C(T)$,
and use $T^C$ to denote $G-T$.
A subset $F$ of edges of $G$ is called \emph{rainbow}
if no distinct edges in $F$ receive the same color,
and a graph is called \emph{rainbow} if its edge-set is rainbow.
Specially, a path $P$ is \emph{rainbow} if no distinct edges in $E(G)$
are assigned the same color.
The length of a path $P=u_1u_2\cdots u_p$ is the number of edges in $E(P)$,
denoted by $\ell(P)$.
We use $u_iPu_j$ to denote the segment between $u_i$ and $u_j$ on $P$.
If $i<j$, then $u_iPu_j=u_iu_{i+1}\cdots u_{j}$;
if $i>j$, then $u_iPu_j=u_iu_{i-1}\cdots u_j$.
For a vertex $v\in V(G)$, the \emph{color-degree}
of $v$ in $G$ is the number of distinct colors assigned to the edges incident to $v$,
denoted by $d^c_G(v)$.
We use $\delta^c(G)=\mbox{min}\{d^c_G(v):v\in V(G)\}$
to denote the \emph{minimum color-degree} of $G$.
The set of neighbors of a vertex $v$
in a graph $G$ is denoted by $N_G(v)$.
Let $N^c(v)$ be a subset of $N_G(v)$
such that $|N^c(v)|=d^c_G(v)$
and each color in $C(v,N_G(v))$
appears in $E(v,N^c(v))$ exactly once.
For each vertex $v\in V(G)$ and a color subset $C'=\{c_1,c_2,\cdots,c_k\}$ of $C(G)$,
let $N_{C'}(v)=\{u~|~u\in N^c(v),c(uv)\in C'\}$.
Let $S$ be a subset of $V$,
we denote $N_{C'}(v)\cap S$ by $N_{C'}(v,S)$.
When $S=V(P)$,
we use $N_{C'}(v,P)$ instead of $N_{C'}(v,V(P))$.
For other notation and terminology not defined here,
we refer to \cite{B}.

The existence of rainbow substructures in edge-colored graphs
has been widely studied in literature. We mention here only those known results that are related to our paper.
For short rainbow cycles, a minimum color-degree condition for the existence of
a rainbow triangle was given by Li \cite{L} in 2013.

\begin{theorem}[\cite{L}]\label{th11}
Let $G$ be an edge-colored graph of order $n\geq 3$.
If $\delta^c(G)>\frac{n}{2}$,
then $G$ has a rainbow triangle.
\end{theorem}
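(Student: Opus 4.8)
\medskip
\noindent\textbf{Proof proposal.} The plan is to argue by contradiction, assuming $G$ has no rainbow triangle while $\delta^c(G)>\frac{n}{2}$. Fix any vertex $v$ and put $A=N^c(v)$, so $|A|=d^c_G(v)>\frac{n}{2}$. If $x,y\in A$ are adjacent, the triangle $vxy$ is not rainbow and $c(vx)\neq c(vy)$ (because $x,y\in N^c(v)$), so $c(xy)\in\{c(vx),c(vy)\}$. Hence I can orient each edge of $G[A]$ by declaring $x\to y$ exactly when $c(xy)=c(vy)$; since $c(vx)\neq c(vy)$ this produces no $2$-cycle, so $D:=(A,\to)$ is an oriented graph. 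A directed cycle $x_1\to x_2\to\cdots\to x_k\to x_1$ in $D$ carries the colours $c(vx_2),c(vx_3),\dots,c(vx_k),c(vx_1)$, which are pairwise distinct; thus it is a \emph{rainbow} cycle of $G$, and in particular a directed triangle in $D$ is a rainbow triangle, so under our assumption $D$ has no directed triangle.

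Two facts about $D$ then drive the argument. First, $D$ has minimum out-degree at least $1$: for $x\in A$ every in-arc $y\to x$ carries the colour $c(vx)$, so the colours present at $x$ lie in $\{c(vx)\}\cup\{c(vy):x\to y\}\cup C\big(x,\,N_G(x)\setminus(A\cup\{v\})\big)$, and since $|N_G(x)\setminus(A\cup\{v\})|\le n-|A|-1$ we get $d^c_G(x)\le 1+d^+_D(x)+(n-|A|-1)$, i.e.\ $d^+_D(x)\ge d^c_G(x)+|A|-n>0$; in particular $D$ contains a directed cycle. Second, $D$ is transitive along edges: if $x\to y\to z$ and $xz\in E(G)$ then $c(xy)=c(vy)\neq c(vz)=c(yz)$, so non-rainbowness of $xyz$ gives $c(xz)\in\{c(vy),c(vz)\}$, while also $c(xz)\in\{c(vx),c(vz)\}$, and distinctness of $c(vx),c(vy),c(vz)$ forces $c(xz)=c(vz)$, i.e.\ $x\to z$. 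Now take a \emph{shortest} directed cycle $C=x_1\to\cdots\to x_k\to x_1$ of $D$; as $D$ has no $2$-cycle and no directed triangle, $k\ge 4$. If $C$ had a chord $x_ix_j\in E(G)$ with $1\le i<j\le k$ and $2\le j-i\le k-2$, then whichever way the colouring orients that chord we obtain a directed cycle of length $(j-i)+1$ or $(k-j+i)+1$, and both of these lie in $\{3,\dots,k-1\}$, giving either a directed (hence rainbow) triangle or a shorter directed cycle --- a contradiction. So $C$ is an \emph{induced} cycle of $G$ with $V(C)\subseteq A$, and $v$ is adjacent to every vertex of $C$.

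It remains to derive a contradiction from this configuration, and I expect this to be the main obstacle. Because $C$ is induced, each $x_i$ has inside $V(C)\cup\{v\}$ only the neighbours $x_{i-1},x_{i+1},v$, and the three edges $x_{i-1}x_i$, $x_ix_{i+1}$, $vx_i$ carry only the two colours $c(vx_i)$ and $c(vx_{i+1})$; hence $d^c_G(x_i)\le 2+\big|V(G)\setminus(V(C)\cup\{v\})\big|=n-k+1$, which with $d^c_G(x_i)>\frac{n}{2}$ already forces $k<\frac{n}{2}+1$ and disposes of small $n$ and of long cycles. For the remaining range $4\le k\le\lceil\frac{n}{2}\rceil$ I plan to exploit that each $x_i$ must then send more than $\frac{n}{2}-2$ distinctly coloured edges into $U:=V(G)\setminus(V(C)\cup\{v\})$, together with the constraints imposed by the non-rainbow triangles $ux_ix_{i+1}$ and $ux_iv$ for $u\in U$ (when $u$ is adjacent to both $v$ and $x_i$ these confine $c(ux_i)$ to the two-element set $\{c(uv),c(vx_i)\}$), and to re-run the orientation argument from a vertex of $C$ using the transitivity fact; the parity bookkeeping in this range can alternatively be absorbed into an induction on $n$, since deleting a vertex lowers each colour-degree by at most $1$, so $\delta^c(G-v)>\frac{n-1}{2}$ when $n$ is even, with base case $n=3$ immediate and the odd case reducible to the even one. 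Everything through the induced cycle is routine; the genuine difficulty is this concluding structural and counting argument.
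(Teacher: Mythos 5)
The paper does not prove this statement --- it is quoted from H.~Li's paper \cite{L} --- so there is no in-paper proof to compare against; I can only judge your argument on its own terms. Your first two paragraphs are correct and are essentially the same germ as the paper's Lemma~\ref{lem21} (orienting the edges of a set with the dependence property $DP_v$ toward the endpoint whose $v$-colour they copy): the orientation $D$ on $A=N^c(v)$ is well defined, directed cycles in $D$ are rainbow in $G$, the out-degree bound $d^+_D(x)\ge d^c_G(x)+|A|-n>0$ is right, the transitivity observation is right, and the shortest-directed-cycle argument does produce an induced rainbow cycle $C\subseteq A$ of length $k\ge 4$ with $d^c_G(x_i)\le n-k+1$, hence $k\le\lceil n/2\rceil$.

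The gap is that the proof stops there: your third paragraph is an announced plan, not an argument, and the two escape routes you offer do not close it. First, the induction on $n$ runs the wrong way: if $n$ is even then $\delta^c(G)\ge n/2+1$ gives $\delta^c(G-v)\ge n/2>(n-1)/2$, so even $n$ reduces to odd $n-1$; but if $n$ is odd and $\delta^c(G)=(n+1)/2$ (the tight case), deleting a vertex only yields $\delta^c(G-v)\ge(n-1)/2$, which is not strictly greater than $(n-1)/2$, so the odd case does \emph{not} reduce to the even one --- exactly the case that must be handled survives. Second, the proposed counting into $U=V(G)\setminus(V(C)\cup\{v\})$ is not carried out, and as stated it already has holes: a vertex $u\in U$ need not be adjacent to $v$, and even when it is, the confinement $c(ux_i)\in\{c(uv),c(vx_i)\}$ only follows when $c(uv)\ne c(vx_i)$ (if $c(uv)=c(vx_i)$ the triangle $uvx_i$ is already non-rainbow and $c(ux_i)$ is unconstrained). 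It is also worth noting that the natural way to finish this style of argument --- bounding $d^c$ of a minimum-out-degree vertex of the oriented dependent set, as the paper does in proving Theorem~\ref{th21} --- only yields thresholds of the form $\frac{3n-3}{4}$, not $\frac{n}{2}$; reaching the sharp bound of Theorem~\ref{th11} requires a genuinely different (more global) idea than what you have set up, so the missing step is not mere bookkeeping.
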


In 2014, Li et al. \cite{LX} improved Theorem \ref{th10}
and got the following result.

\begin{theorem}[\cite{LX}]\label{th10}
Let $G$ be an edge-colored graph of order $n\geq 3$
satisfying one of the following conditions:

(1) $\sum_{u\in V(G)}d^c(u)\geq \frac{n(n+1)}{2}$,

(2) $\delta^c(G)\geq \frac{n}{2}$ and $G\notin \{K_{\frac{n}{2},\frac{n}{2}},K_4,K_4-e\}$.\\
Then $G$ contains a rainbow triangle.
\end{theorem}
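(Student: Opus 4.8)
The plan is to prove Theorem~\ref{th10} by contradiction, handling conditions (1) and (2) in parallel. Assume throughout that $G$ is rainbow‑triangle‑free. For (1) the target is the sharp counting inequality $\sum_{u\in V(G)}d^c(u)\le\frac{n(n+1)}{2}-1$, which contradicts (1); for (2) the target is the structural statement that a rainbow‑triangle‑free graph with $\delta^c(G)\ge\frac n2$ must be one of $K_{n/2,n/2}$, $K_4$, $K_4-e$, contradicting the exclusion in (2). It is convenient to note at the outset that for odd $n$ the hypothesis $\delta^c(G)\ge\frac n2$ already gives $\delta^c(G)>\frac n2$, so Theorem~\ref{th11} applies; thus (2) is only substantive for even $n$, and in fact only when $\delta^c(G)=\frac n2$.

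For the counting inequality I would induct on $n$, the base $n=3$ being immediate (a non‑rainbow triangle has color‑degree sum $5=\frac{3\cdot4}{2}-1$, and fewer edges only help). For the inductive step, delete a vertex $v$ realizing $\delta^c(G)$: then $G-v$ is still rainbow‑triangle‑free, so $\sum_{u\ne v}d^c_{G-v}(u)\le\frac{(n-1)n}{2}-1$, and it suffices to control the increment $\sum_{G}d^c(u)-\sum_{G-v}d^c(u)=d^c_G(v)+|A|$, where $A$ is the set of neighbors $u$ of $v$ at which the color $c(uv)$ occurs on no other edge. The key local fact is that rainbow‑triangle‑freeness forbids $u,u'\in A$ with $c(uv)\ne c(u'v)$ from being adjacent, since the triangle $vuu'$ is non‑rainbow only if one of $u,u'$ repeats at $uu'$ a color supposed to be unique there; hence $A$ partitions, according to color to $v$, into classes with no cross‑edges, and applying the induction hypothesis inside these classes---rather than crudely estimating $|A|\le\deg_G(v)$---closes the induction, with the ``ordered'' coloring $c(v_iv_j)=\min\{i,j\}$ showing tightness.

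For the structural statement I would split on whether $G$ has a triangle. If not, then $\delta(G)\ge\delta^c(G)\ge\frac n2>\frac{2n}{5}$, so $G$ is bipartite by the Andr\'asfai--Erd\H{o}s--S\'os theorem, and a minimum‑degree count forces $G=K_{n/2,n/2}$. If $G$ has a triangle, I claim $G$ is complete or $n=4$; granting this, when $G$ is complete I invoke Gallai's structure theorem to write $V(G)=V_1\cup\cdots\cup V_t$ with $t\ge2$ and $2$‑colored reduced graph, so that each $v\in V_i$ has $d^c_G(v)\le d^c_{G[V_i]}(v)+2\le|V_i|+1$; thus every part has size $\ge\frac n2-1$, which forces $t=2$ except for small $n$. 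With $t=2$ the cross‑edges use a single color, so each $v\in V_i$ satisfies $d^c_{G[V_i]}(v)\ge\frac n2-1>\frac{|V_i|}{2}$ and Theorem~\ref{th11} yields a rainbow triangle inside $G[V_i]$ unless $|V_i|\le2$, while the residual small values of $n$, checked directly, produce precisely the exceptions $K_4$ and $K_4-e$.

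The hardest step is the claim that a rainbow‑triangle‑free graph with a triangle and $\delta^c(G)\ge\frac n2$ is complete (or has $n=4$): Gallai's theorem is a statement about complete graphs and cannot be applied when an edge is missing, so one must argue separately---using that $\delta^c(G)\ge\frac n2$ makes $G$ so dense that every non‑edge creates many common‑neighbor pairs and hence many triangles among them, while rainbow‑triangle‑freeness severely constrains those triangles---that a non‑complete $G$ with a triangle cannot occur unless $n$ is tiny; this is the kind of point at which earlier treatments likely left a gap. A secondary difficulty is the bookkeeping in the deletion step for (1): since the minimum‑color‑degree vertex may have large ordinary degree, one genuinely needs the class structure of $A$ and the inductive hypothesis applied within it, and making the estimate tight enough that equality characterizes the ordered Gallai coloring is delicate.
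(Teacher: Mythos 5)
This is Theorem~\ref{th10} of the paper, which is quoted from \cite{LX} without proof, so there is no in-paper argument to compare against; I can only assess your proposal on its own terms. Your targets are the right ones (the sharp bound $\sum_u d^c(u)\le \frac{n(n+1)}{2}-1$ for rainbow-triangle-free graphs, with the ordered coloring $c(v_iv_j)=\min\{i,j\}$ extremal, and the structural characterization for part (2)), and your local observations about the set $A$ are correct. But both halves have a genuine gap at exactly the step that carries the proof. For part (1), the induction step requires $d^c_G(v)+|A|\le n$ for the deleted vertex $v$, and this is \emph{false} for a vertex realizing $\delta^c(G)$: take $n=5$, $V=\{v,a_1,a_2,b_1,b_2\}$, with $c(va_1)=c(va_2)=1$, $c(vb_1)=c(vb_2)=2$, $c(a_1a_2)=3$, $c(b_1b_2)=4$ and no other edges. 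This graph is rainbow-triangle-free, every vertex has color degree $2=\delta^c(G)$, yet all four neighbors of $v$ lie in $A$, so $d^c(v)+|A|=6>n$. (This is not a counterexample to the theorem, only to your increment bound.) Your proposed repair --- ``applying the induction hypothesis inside these classes'' --- is not an argument: the classes of $A$ need not be components of $G-v$ (vertices of $N(v)\setminus A$ and non-neighbors of $v$ can join them), the induction hypothesis controls $\sum_u d^c_{G[A_i]}(u)$ rather than the quantity $d^c(v)+|A|$ you need, and you never say how the pieces recombine to give $\frac{n(n+1)}{2}-1$. As it stands the inductive step does not close.

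For part (2), the triangle-free branch via Andr\'asfai--Erd\H{o}s--S\'os is fine, and the Gallai-partition computation for complete $G$ is plausible. But the entire weight of the case analysis rests on the claim that a rainbow-triangle-free graph with $\delta^c(G)\ge \frac n2$ that contains a triangle must be complete (or have $n=4$), and you explicitly do not prove it --- you only remark that it ``is the kind of point at which earlier treatments likely left a gap.'' That claim is essentially the whole content of part (2) beyond the complete case (note $K_4-e$ is a non-complete exception with a triangle, so the claim is delicate near equality), and without it Gallai's theorem cannot be invoked at all. So the proposal is a reasonable road map with correct extremal examples, but neither of its two load-bearing steps is established.
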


There is also a result about the rainbow triangles in an edge-colored complete graph
which was proved by Fujita and Magnant in 2011.

\begin{theorem}[\cite{FM}]\label{th12}
Let $K_n$ be an edge-colored complete graph of order $n\geq 3$.
If $\delta^c(K_n)\geq \frac{n+1}{2}$,
then each vertex of $K_n$ is contained in a rainbow triangle.
\end{theorem}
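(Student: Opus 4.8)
The plan is to fix an arbitrary vertex $v$ of $K_n$ and argue by contradiction: assume that $v$ lies in no rainbow triangle and yet $\delta^c(K_n)\ge\frac{n+1}{2}$. First I would record the local structure forced by the first assumption. Partition $V(K_n)\setminus\{v\}$ into color classes $V_1,\dots,V_d$, where $V_i$ collects all vertices $u$ for which $c(uv)$ equals the $i$-th color appearing at $v$; here $d=d^c_G(v)$, and by the second assumption $d\ge\frac{n+1}{2}$. Writing $\alpha_i$ for the color of the edges between $v$ and $V_i$, the absence of a rainbow triangle through $v$ says exactly that every edge $xy$ with $x\in V_i$, $y\in V_j$ and $i\ne j$ satisfies $c(xy)\in\{\alpha_i,\alpha_j\}$, while edges inside a single class are unconstrained.

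The crucial point is that a large color degree at $v$ forces many singleton classes. Let $T$ be the union of all classes of size $1$ and put $t=|T|$. Since the $d$ classes partition $n-1$ vertices, $n-1=\sum_i|V_i|\ge t+2(d-t)=2d-t$, whence $t\ge 2d-n+1$, and because $d\ge\frac{n+1}{2}$ this gives $t\ge 2$. The reason to single out $T$ is that a vertex in a singleton class has no intra-class edges in which extra colors could hide, so its color degree is tightly controlled; this is exactly where the analogous estimate fails for vertices in large classes, and getting around that difficulty is the main obstacle of the argument.

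I would then bound $d^c_G(v_i)$ for each $v_i\in T$. Its edge to $v$ contributes the single color $\alpha_i$. For another singleton $v_j\in T$ the edge $v_iv_j$ is colored $\alpha_i$ or $\alpha_j$; orienting $v_i\to v_j$ precisely when $c(v_iv_j)=\alpha_j$ turns $T$ into a tournament in which the new colors $v_i$ sees among $T$ number exactly $d^+_T(v_i)$. Finally, each non-singleton class $V_j$ can contribute at most one new color $\alpha_j$ to $v_i$ (all edges from $v_i$ to $V_j$ are colored $\alpha_i$ or $\alpha_j$), and there are $d-t$ such classes. Since every edge incident to $v_i$ is accounted for exactly once in these three groups, we obtain
\[
d^c_G(v_i)\le 1+d^+_T(v_i)+(d-t)\qquad\text{for every }v_i\in T.
\]

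Summing this over the $t$ vertices of $T$ and using $\sum_{v_i\in T}d^+_T(v_i)=\binom{t}{2}$ for a tournament on $t$ vertices, together with the assumed lower bound $d^c_G(v_i)\ge\frac{n+1}{2}$, yields $t\cdot\frac{n+1}{2}\le t+\binom{t}{2}+t(d-t)$. Dividing by $t>0$ and simplifying gives $\frac{n+1}{2}\le d-\frac{t-1}{2}$, i.e. $t\le 2d-n$, which contradicts the bound $t\ge 2d-n+1$ established above. This contradiction shows $v$ must lie in a rainbow triangle, and since $v$ was arbitrary, every vertex of $K_n$ does. I expect the only delicate points to be the bookkeeping that each edge at a singleton vertex is counted exactly once, so the three contributions genuinely bound $d^c_G(v_i)$, and the inequality forcing $t\ge 2$, which is what makes the averaging over $T$ both legitimate and nonvacuous.
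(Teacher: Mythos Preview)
The paper does not give its own proof of this statement: Theorem~\ref{th12} is quoted from Fujita and Magnant~\cite{FM} as background and is not proved anywhere in the present paper. So there is no in-paper argument to compare against directly.

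That said, your proof is correct. The key observations---that every cross-class edge at $v$ must use one of the two incident $v$-colors, that singleton classes are the only ones whose vertices have fully constrained neighborhoods, and that orienting edges among singletons yields a tournament whose out-degrees count the ``new'' colors---are all sound, and the double count $\sum d^+_T(v_i)=\binom{t}{2}$ followed by division by $t$ cleanly produces the contradiction $t\le 2d-n$ versus $t\ge 2d-n+1$.

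It is worth noting that your method is close in spirit to what the paper does for its own Theorem~\ref{th21} (the non-complete analogue). There the authors formalize your cross-class constraint as the \emph{dependence property} $DP_v$ and, in Lemma~\ref{lem21}, use the same orientation trick on $G[A]$ to extract a single vertex of small out-degree (at most $\frac{|A|-1}{2}$), rather than summing over a tournament. Your averaging over all of $T$ is a mild variant of that lemma; either yields the same numerical bound. The main structural difference is that you exploit completeness to isolate the singleton classes $T$ and work only there, which is exactly what makes the sharper threshold $\frac{n+1}{2}$ attainable, whereas the paper's argument for general $G$ works with the common color-neighborhood $S$ of an edge and lands at $\frac{3n-3}{4}$.
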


In this paper, we get the the following result.

\begin{theorem}\label{th21}
Let $G$ be an edge-colored graph of order $n\geq 3$.
If $\delta^c(G)>\frac{3n-3}{4}$,
then each vertex of $G$ is contained in a rainbow triangle.
\end{theorem}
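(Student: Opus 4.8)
\emph{Proof proposal.} The plan is to argue by contradiction: assume some vertex $v\in V(G)$ lies in no rainbow triangle and deduce $n<5$, so that the theorem follows for $n\ge 5$, while for $n\in\{3,4\}$ the hypothesis $\delta^c(G)>\frac{3n-3}{4}$ forces $G=K_n$ and a short direct check (all edges at a vertex have distinct colors, and one traces the forced colors on the two or three remaining edges) produces a rainbow triangle through every vertex.

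First I would set $S=N^c(v)$, so that $|S|=d^c_G(v)>\frac{3n-3}{4}$ and, by the definition of $N^c(v)$, the edges from $v$ to $S$ carry pairwise distinct colors; write $f(y)=c(vy)$ for $y\in S$ and $T=V(G)\setminus(\{v\}\cup S)$, so $|T|=n-1-|S|<\frac{n-1}{4}$. The key observation is that for every edge $xy$ of $G[S]$ the triangle $vxy$ fails to be rainbow, and since $f(x)\ne f(y)$ this forces $c(xy)$ to equal \emph{exactly one} of $f(x),f(y)$; I will say $xy$ is \emph{charged} to that endpoint. For $x\in S$ let $d^{+}(x)$ (resp.\ $d^{-}(x)$) denote the number of edges of $G[S]$ at $x$ that are charged to $x$ (resp.\ to the other endpoint), so that $d^{+}(x)+d^{-}(x)=\deg_{G[S]}(x)\le |S|-1$.

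Next I would bound $d^{-}(x)$ from below using the color-degree hypothesis at $x$: every color appearing on an edge incident to $x$ is either $f(x)$ (the color of $xv$, and of every edge charged to $x$), or $f(y)$ for some neighbor $y\in S$ whose edge to $x$ is charged to $y$ (these $f(y)$ are pairwise distinct and distinct from $f(x)$, and there are $d^{-}(x)$ of them), or one of at most $|T|$ colors on the edges from $x$ to $T$. Hence $d^c_G(x)\le 1+d^{-}(x)+|T|$, giving $d^{-}(x)>\frac{3n-3}{4}-1-|T|$ for every $x\in S$.

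Finally, double counting the edges of $G[S]$ by their charged and uncharged endpoints gives $\sum_{x\in S}d^{+}(x)=\sum_{x\in S}d^{-}(x)$; combining this with $d^{+}(x)\le |S|-1-d^{-}(x)$ yields $2\sum_{x\in S}d^{-}(x)\le |S|(|S|-1)$, so the average of $d^{-}(x)$ over $S$ is at most $\frac{|S|-1}{2}=\frac{n-2-|T|}{2}$. Confronting this with the pointwise bound $d^{-}(x)>\frac{3n-3}{4}-1-|T|$ and simplifying gives $n-3<2|T|$, which together with $|T|<\frac{n-1}{4}$ forces $n<5$, the desired contradiction. I expect the main obstacle to be precisely this constant-chasing: one must use \emph{both} the trivial bound $d^{+}(x)\le |S|-1-d^{-}(x)$ and the color-degree lower bound on $d^{-}(x)$ at full strength, and observe that the slack introduced by the ``leftover'' set $T$ is exactly what the threshold $\frac{3n-3}{4}$ is calibrated to absorb.
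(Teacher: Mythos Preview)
Your argument is correct. The core engine is the same as the paper's: the set $S$ has the ``dependence property'' with respect to $v$ (every edge of $G[S]$ is colored $c(vx)$ or $c(vy)$), and an orientation/charging count shows some vertex of $S$ has at most $\frac{|S|-1}{2}$ incident colors in $G[S]$ other than $c(vx)$ --- this is exactly the paper's Lemma~2.1.

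Where you differ is in the choice of $S$. The paper first picks an auxiliary neighbor $u\in N^c(v)$ and takes $S$ to be the \emph{common} color-neighborhood $N_{C(G)\setminus\{c(uv)\}}(v)\cap N_{C(G)\setminus\{c(uv)\}}(u)$, with $|S|\ge 2\delta^c(G)-n$. The payoff is that for any $x_0\in S$ one has $c(ux_0)=c(vx_0)$ (otherwise $uvx_0$ is rainbow), so $u$ and $v$ together contribute only one color at $x_0$; this extra saving of one in the count yields $\delta^c(G)\le\frac{3n-3}{4}$ directly for every $n\ge 3$. You instead take $S=N^c(v)$, which is conceptually simpler and avoids Lemma~2.2, but loses that ``$+1$''; your inequalities then only force $n<5$, and you must dispatch $n\in\{3,4\}$ by hand. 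Both routes are clean; the paper's auxiliary-vertex trick buys uniformity in $n$, while yours buys a shorter setup at the price of a small-case check.
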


Next, for the rainbow $C_4$, \v{C}ada et al. in \cite{CKRY} obtained the following result.

\begin{theorem}[\cite{CKRY}]\label{th20}
Let $G$ be an edge-colored graph of order $n$.
If $G$ is triangle-free and $\delta^c(G)>\frac{n}{3}+1$,
then $G$ contains a rainbow $C_4$.
\end{theorem}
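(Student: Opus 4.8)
\noindent The plan is to proceed by contradiction: suppose $G$ is triangle-free, $\delta^c(G)>\frac{n}{3}+1$, and $G$ contains no rainbow $C_4$. (For small $n$ the hypothesis is vacuous, as it forces $\delta(G)\ge\delta^c(G)>\frac{n}{3}+1$, which a triangle-free graph cannot achieve; so I may take $n$ large.) I would first fix a vertex $v$, set $A=N_G(v)$ and $R=V(G)\setminus N_G[v]$, and exploit triangle-freeness through the fact that $N_G(a)\cap A=\emptyset$ for every $a\in A$, so that $N_G(a)\subseteq\{v\}\cup R$. This gives $\delta^c(G)-1\le d^c_G(a)-1\le|R|$, while $|R|=n-1-|A|\le n-1-\delta^c(G)$; together these yield $\frac{n}{3}<|R|<\frac{2n}{3}$ for every $v$. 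Writing $A^*=N^c(v)$ and, for each $a\in A^*$, choosing the representative set $N^c(a)$ so that $v\in N^c(a)$, I put $D_a=N^c(a)\setminus\{v\}\subseteq R$. Then the edges from $a$ to $D_a$ have pairwise distinct colours, all different from $c(va)$, and $|D_a|=d^c_G(a)-1>\frac{n}{3}>\frac{1}{2}|R|$, so any two of these sets meet in at least three vertices.

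The key step would be the following monochromatic-cherry lemma: for all distinct $a,b\in A^*$ there is $x\in D_a\cap D_b$ with $c(ax)=c(bx)$. For any such $x$, the $4$-cycle $v\,a\,x\,b\,v$ has colours $c(va),c(vb),c(ax),c(bx)$, and among the six pairs we already know $c(va)\ne c(vb)$ (since $a,b\in N^c(v)$), $c(va)\ne c(ax)$ and $c(vb)\ne c(bx)$ (since $v,x\in N^c(a)$, resp.\ $v,x\in N^c(b)$). As $G$ has no rainbow $C_4$, every $x\in D_a\cap D_b$ must satisfy $c(bx)=c(va)$, or $c(ax)=c(vb)$, or $c(ax)=c(bx)$; the first two each hold for at most one $x\in D_a\cap D_b$ (the edges from $b$, resp.\ $a$, to their $D$-sets being rainbow), so at least $|D_a\cap D_b|-2\ge1$ of the $x$'s give the desired cherry.

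Next I would turn this into global structure. Since $\sum_{a\in A^*}|D_a|>\frac{n}{3}|A^*|>\frac{n^2}{9}$ while $|R|<\frac{2n}{3}$, some $x_0\in R$ lies in $D_a$ for more than $\frac{n}{6}$ vertices $a$; analysing the cycles $v\,a\,x_0\,b\,v$ over those $a$'s — and once more using that the spoke colours $c(v\cdot)$ are pairwise distinct — I would extract a colour $\gamma_0$ and a set $A'\subseteq A^*$ with $|A'|>\frac{n}{6}-O(1)$, every vertex of which is joined to $x_0$ by colour $\gamma_0$ (note $vx_0\notin E(G)$ by triangle-freeness). From $v$, $x_0$, a vertex $a\in A'$ and one more vertex one can usually construct a rainbow $C_4$; if not, the colouring must keep ``funnelling'', and repeating the extraction inside $A'$ over the ground set $R\setminus\{x_0\}$ produces $x_1,\gamma_1$, then $x_2,\gamma_2$, and so on. This leaves a nested chain of cores $A'\supseteq A''\supseteq\cdots$ (each a constant fraction of its predecessor) and an independent set $\{x_0,x_1,\dots\}$ carrying a very rigid colouring — yet each surviving core vertex has spent only $O(1)$ colours, so it still has more than $\frac{n}{3}-O(1)$ differently-coloured edges running into $R$. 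The last task is to rule this configuration out: for instance because the spent colours, or the $n$ vertices, eventually run out, or because at the bottom of the recursion three surviving core vertices together with their many unused edges are forced to span a rainbow $C_4$.

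The hard part will be exactly this degenerate case — many vertices of $N^c(v)$ sending one and the same colour into a common neighbour — which is also the point at which the proof in \cite{CKRY} has a gap. I expect disposing of it to require patient bookkeeping rather than a single trick: selecting the representative sets $N^c(\cdot)$ coherently all the way down the recursion, tracking the colours already used at each surviving vertex, and verifying at each level that enough slack remains to re-invoke the cherry lemma (since each round costs a constant factor, only $O(\log n)$ rounds are available). Note that the hypothesis $\delta^c(G)>\frac{n}{3}+1$ is used only to guarantee $|D_a|>\frac{1}{2}|R|$, hence $|D_a\cap D_b|\ge3$; this one inequality powers the cherry lemma and every later reincarnation of it, and it is what would fail under a weaker degree bound.
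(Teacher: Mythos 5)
First, note that the paper you are checking against does not actually prove this statement: Theorem~\ref{th20} is quoted from \cite{CKRY} without proof, so there is no in-paper argument to compare with, and your proposal has to stand on its own. Its first half does: the decomposition $V=\{v\}\cup A\cup R$, the bounds $\frac{n}{3}<|R|$ and $|R|\le n-3-\lfloor n/3\rfloor$ coming from triangle-freeness, the sets $D_a\subseteq R$ with $|D_a|\ge\lfloor n/3\rfloor+2-1$, hence $|D_a\cap D_b|\ge 3$, and the monochromatic-cherry lemma are all correct and cleanly argued (the count $3\lfloor n/3\rfloor-n+5\ge 3$ does deliver the three common vertices you need, and the ``at most one exceptional $x$'' argument for each of $c(bx)=c(va)$ and $c(ax)=c(vb)$ is right).

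The problem is that everything after the cherry lemma is a plan, not a proof, and the gap is structural rather than a matter of unfinished bookkeeping. The pigeonhole step producing $x_0$, $\gamma_0$ and a core $A'$ with $|A'|>\frac{n}{6}-O(1)$ can indeed be justified (one can show at most one colour class of $a\mapsto c(ax_0)$ on $A_0$ has size $\ge 2$, else a rainbow $vax_0bv$ appears), but from that point on you never exhibit a contradiction. Since $vx_0\notin E(G)$, any $C_4$ through $v$ and $x_0$ has them opposite, so the cycles $vax_0bv$ are exactly the ones you have already exhausted; to go further you must switch to cycles of type $ax_0by a$ inside $A'\cup R$, and you do not analyse these. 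More seriously, your two proposed termination mechanisms do not obviously fire: with each round shrinking the core by a constant factor you have only $O(\log n)$ rounds, during which each surviving vertex spends only $O(1)$ colours per round, i.e.\ $O(\log n)$ in total against a budget of more than $\frac{n}{3}$, and only $O(\log n)$ vertices $x_0,x_1,\dots$ are consumed out of $|R|>\frac{n}{3}$; so neither the colours nor the vertices ``run out.'' The third suggested ending (three surviving core vertices forced to span a rainbow $C_4$) is exactly the assertion that needs proving and is nowhere argued. As it stands the proposal establishes a genuinely useful local lemma but does not prove the theorem; you would need to replace the iterative funnelling by a concrete global argument (for instance, a direct analysis of the colouring of $E(A',R)$ forced by the existence of the monochromatic star at $x_0$) before this can be accepted.
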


In this paper, we also get a result as follows.

\begin{theorem}\label{th22}
Let $G$ be an edge-colored graph of order $n\geq 3$.
If for each vertex $v$ of $G$, $d^c(v)>\frac{3n}{4}$,
then each vertex of $G$ is contained in a rainbow $C_4$.
\end{theorem}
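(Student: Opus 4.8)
The plan is to argue by contradiction. Suppose some vertex $v$ is contained in no rainbow $C_4$; I will then exhibit a vertex $u$ with $d^c(u)\le\frac{3n}{4}$. The only auxiliary fact I will use repeatedly is elementary: if $d^c(u)>\frac{3n}{4}$ and $\deg_G(u)\le n-1$, then at $u$ the largest color class has size at most $\deg_G(u)-d^c(u)+1<\frac n4$; that is, every color appears on fewer than $\frac n4$ edges incident with $u$. This estimate will supply the final contradiction.

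Fix $a\in N^c(v)$ and put $\alpha=c(va)$. The main configuration is the following: if $w\in N^c(a)$ with $w\ne v$ and $c(aw)\ne\alpha$, and $b$ is a common neighbor of $v$ and $w$ with $b\ne a$, $c(vb)\notin\{\alpha,c(aw)\}$ and $c(wb)\notin\{\alpha,c(aw)\}$, then in the $4$-cycle $v\,a\,w\,b\,v$ the four colors $\alpha$, $c(aw)$, $c(wb)$, $c(vb)$ are pairwise distinct except possibly for $c(wb)=c(vb)$; since $v$ lies in no rainbow $C_4$, we are forced to have $c(wb)=c(vb)$. (The membership conditions are precisely what guarantees that $v,a,w,b$ are four distinct vertices and that all four edges of the cycle are present.) Now set $W=\{w\in N^c(a): w\ne v,\ c(aw)\ne\alpha\}$, so $|W|>\frac{3n}{4}-2$, and for $w\in W$ let $Z_w$ be the set of $b\in N^c(v)\cap N^c(w)$ with $b\ne a$, $c(vb)\notin\{\alpha,c(aw)\}$ and $c(wb)\notin\{\alpha,c(aw)\}$. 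Inclusion–exclusion, discarding $O(1)$ vertices of $N^c(v)\cap N^c(w)$ for the color and distinctness constraints, gives $|Z_w|\ge d^c(v)+d^c(w)-n-O(1)>\frac n2-O(1)$, and by the previous sentence $c(wb)=c(vb)$ for every $b\in Z_w$.

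Finally I would double count the pairs $(w,b)$ with $w\in W$ and $b\in Z_w$. Summing over $w$ gives $\sum_{w\in W}|Z_w|>\bigl(\tfrac{3n}{4}-O(1)\bigr)\bigl(\tfrac n2-O(1)\bigr)$. Summing over $b$: each such $b$ lies in $N^c(v)$, and if $b$ belongs to $f(b)$ of the sets $Z_w$ then the $f(b)$ edges $bw$ (over those $w$ with $b\in Z_w$) together with the edge $bv$ are all colored $c(vb)$, so the auxiliary fact forces $f(b)<\tfrac n4-1$; hence $\sum_{w\in W}|Z_w|=\sum_{b\in N^c(v)}f(b)<d^c(v)\bigl(\tfrac n4-1\bigr)\le(n-1)\bigl(\tfrac n4-1\bigr)$. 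Comparing the two estimates yields a contradiction, which completes the proof for all $n$ above an absolute constant; the finitely many remaining small values of $n$ are treated separately, where the hypothesis $d^c(v)>\frac{3n}{4}$ is so strong that $G$ is complete (or nearly so) with every vertex seeing its incident edges in distinct colors, and a rainbow $C_4$ through a prescribed vertex is found by a short direct argument.

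I expect the last comparison to be the real work. The crude estimates above only close the gap once $n$ exceeds a moderate threshold, so one must track carefully exactly which vertices are discarded when bounding $|Z_w|$, and keep account of $\lceil\tfrac{3n}{4}\rceil$ versus $\tfrac{3n}{4}$, in order both to lower that threshold and to make the separate handling of small $n$ manageable. Everything preceding this point is bookkeeping, the only subtlety being to keep the four vertices of each cycle distinct and all four of its edges present — which is exactly why the sets $W$ and $Z_w$ are defined with the membership restrictions above.
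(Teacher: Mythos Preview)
Your route is quite different from the paper's. The paper first applies Theorem~\ref{th21} to place $v$ on a rainbow triangle $T=uvw$, then looks at $S=N_{C(G)\setminus C(T)}(u)\cap N_{C(G)\setminus C(T)}(v)$. Absence of a rainbow $C_4$ through $v$ forces $c(ux)=c(vx)$ for every $x\in S$, and after deleting the edges of colour $c(uv)$ the set $S$ acquires the dependence property $DP_v$; Lemma~\ref{lem21} then supplies a vertex $x_0\in S$ seeing at most $\tfrac{|S|-1}{2}$ further colours inside $S$, and a short count (together with $|S|\ge 2\delta^c(G)-n-3$ from Lemma~\ref{lem22}) yields $\delta^c(G)\le\tfrac{3n}{4}$. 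This is a single inequality valid for every $n\ge 3$, with no threshold and no residual cases.

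Your double-count is sound in outline, and the key inference that $c(wb)=c(vb)$ for $b\in Z_w$ is exactly right. But the closing paragraph is where the proposal actually fails as written: the claim that for the leftover small $n$ the hypothesis $d^c(v)>\tfrac{3n}{4}$ makes $G$ ``complete (or nearly so) with every vertex seeing its incident edges in distinct colours'' is false. That conclusion holds only for $n\le 8$ (where $\lfloor\tfrac{3n}{4}\rfloor+1=n-1$); already at $n=9$ one may have $d^c\equiv 7$ with repeated colours at every vertex and missing edges, so your crude $O(1)$ estimates leave a genuine uncovered band---roughly $9\le n\lesssim 17$---where neither half of your argument applies. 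If you track the constants exactly ($|W|\ge\delta^c-2$; $|Z_w|\ge 2\delta^c-n-3$, since only three exclusions are ever needed; $f(b)\le n-\delta^c-1$; at most $n-2$ relevant $b$'s) the contradiction in fact persists down to $n\ge 6$, leaving only the properly edge-coloured $K_5$ to check by hand. So your method can be salvaged, but not by the hand-wave you give; the paper's dependence-property argument buys precisely the freedom from this bookkeeping.
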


Finally, for long rainbow cycles,
Li and Wang in \cite{LW} got the following result.

\begin{theorem}[\cite{LW}]\label{th13}
Let $G$ be an edge-colored graph of order $n\geq 8$.
If for each vertex $v$ of $G$,
$d^c(v)\geq d\geq \frac{3n}{4}+1$,
then $G$ has a rainbow cycle of length at least $d-\frac{3n}{4}+2$.
\end{theorem}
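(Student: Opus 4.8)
The plan is to extract the rainbow cycle from a longest rainbow path $P=v_0v_1\cdots v_\ell$ of $G$, by closing it up through its two endpoints. Write $c_i=c(v_{i-1}v_i)$, so that $C(P)=\{c_1,\dots,c_\ell\}$ and $|C(P)|=\ell$. First I would record the maximality facts: since $P$ extends neither at $v_0$ nor at $v_\ell$, every edge joining $v_0$ (resp.\ $v_\ell$) to $V(G)\setminus V(P)$ has its colour in $C(P)$; as only $n-1-\ell$ vertices lie off $P$ and $C(P)$ has only $\ell$ colours, at most $\min\{\ell,\,n-1-\ell\}\le\frac{n-1}{2}$ colours of $N^c(v_0)$ occur exclusively on edges leaving $P$. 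Hence there is a set $A\subseteq\{1,\dots,\ell\}$ with $\{c(v_0v_i):i\in A\}$ pairwise distinct and $|A|\ge d-\frac{n-1}{2}$, and symmetrically a set $B\subseteq\{0,\dots,\ell-1\}$ for $v_\ell$ with $|B|\ge d-\frac{n-1}{2}$; the same count also gives $\ell\ge d/2$, so that (using $n\ge8$, $d\ge\frac{3n}{4}+1$) the path is already strictly longer than the target length $L:=d-\frac{3n}{4}+2$, leaving room to lose some vertices when passing from a path to a cycle.

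Next I would consider three families of candidate rainbow cycles. For $i\in A$ the \emph{head cycle} $v_0v_1\cdots v_iv_0$ has length $i$ and is rainbow iff $c(v_0v_i)\notin\{c_1,\dots,c_i\}$; for $j\in B$ the \emph{tail cycle} $v_jv_{j+1}\cdots v_\ell v_j$ has length $\ell-j$ and is rainbow iff $c(v_\ell v_j)\notin\{c_{j+1},\dots,c_\ell\}$; and for $i\in A$, $j\in B$ with $j<i$ the \emph{crossing cycle} $v_0v_iv_{i+1}\cdots v_\ell v_jv_{j-1}\cdots v_1v_0$ has length $\ell+2-(i-j)$ and uses precisely the path colours $\{c_1,\dots,c_j\}\cup\{c_{i+1},\dots,c_\ell\}$ together with the chord colours $c(v_0v_i)$ and $c(v_\ell v_j)$. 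A short computation with $d\ge\frac{3n}{4}+1$ gives $|A|+|B|\ge\ell+2$, so $A\cap B\neq\emptyset$; more to the point, setting $A_0=\{i\in A:i\ge L\}$ and $B_0=\{j\in B:j\le\ell-L\}$ one gets $|A_0|,|B_0|\ge\frac{n-2}{4}$, while a crossing cycle has length $\ge L$ exactly when $1\le i-j\le W$ with $W:=\ell+2-L\ge\frac{n+2}{4}$.

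Assume for contradiction that $G$ has no rainbow cycle of length $\ge L$. Then no head cycle with $i\in A_0$, no tail cycle with $j\in B_0$, and no crossing cycle with $i-j\le W$ is rainbow. Since a head cycle fails only if its chord repeats one of $c_1,\dots,c_i$ and a tail cycle only if its chord repeats one of $c_{j+1},\dots,c_\ell$, the first two facts force $c(v_0v_i)=c_{m'(i)}$ with $m'(i)\le i$ for every $i\in A_0$, and $c(v_\ell v_j)=c_{m(j)}$ with $m(j)>j$ for every $j\in B_0$, where $m'$ and $m$ are injective because the chord colours at each endpoint are distinct. Substituting into the (failed) crossing-cycle condition, for a pair $(i,j)$ with $1\le i-j\le W$ the obstruction reduces to ``$m'(i)\le j$, or $m(j)>i$, or $m'(i)=m(j)$''. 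I would then run a pigeonhole/averaging argument over the $\ge\frac{n-2}{4}$ indices of $A_0$, the $\ge\frac{n-2}{4}$ indices of $B_0$ and the window of width $W\ge\frac{n+2}{4}$, exploiting the injectivity of $m,m'$, to locate $(i,j)\in A_0\times B_0$ with $1\le i-j\le W$, $m'(i)>j$, $m(j)\le i$ and $m'(i)\ne m(j)$; for such a pair the crossing cycle is rainbow and has length $\ge L$, the desired contradiction.

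The hard part will be exactly this last step: one must meet the positional constraint $1\le i-j\le W$ and all three colour constraints at once, keeping careful track of where $m'(i)$ and $m(j)$ sit relative to $i$ and $j$. Pathological configurations --- $A_0$ and $B_0$ pushed to opposite ends of $P$, or the only ``compatible'' index being colour-bad at both endpoints --- seem to force one to supplement the longest-path choice with a rotation--extension (P\'osa-type) analysis of the rainbow paths $v_0v_1\cdots v_jv_\ell v_{\ell-1}\cdots v_{m(j)}$ coming from the bad chords, and I suspect it is here that the gap in the earlier treatment occurs. It is probably cleanest to first dispose of the extreme ranges of $\ell$ --- $\ell$ near $d/2$ (where $A$ and $B$ are almost all of $\{1,\dots,\ell\}$) and $\ell$ near $n-1$ (where $|A_0|,|B_0|$ are nearly $\ell$ and there is abundant positional freedom) --- and to concentrate on the narrow intermediate band. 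The hypothesis $d>\frac{3n}{4}$ enters precisely so that $|A_0|,|B_0|\gtrsim n/4$ and $W\gtrsim n/4$, which is what makes the count succeed and produces the length $d-\frac{3n}{4}+2$.
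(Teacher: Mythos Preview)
This theorem is not proved in the paper at all: it is quoted from Li--Wang \cite{LW} as background for the long-rainbow-cycle discussion, and the paper supplies no argument for it. So there is no ``paper's own proof'' to compare your proposal against.

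As for the proposal itself, it is not a proof but a plan, and you say so explicitly: the crucial step is the pigeonhole/averaging argument over $A_0$, $B_0$ and the window of width $W$, and you write that this ``hard part'' has not been carried out, that ``pathological configurations'' may require a rotation--extension analysis, and that you ``suspect'' this is where difficulties lie. Concretely, what you need is a pair $(i,j)\in A_0\times B_0$ with $1\le i-j\le W$, $j<m'(i)\le i$, $j<m(j)\le i$, and $m'(i)\neq m(j)$. Having $|A_0|,|B_0|\ge\frac{n-2}{4}$ and $W\ge\frac{n+2}{4}$ gives you many pairs $(i,j)$ in the right positional window, but nothing you have written controls where $m'(i)$ and $m(j)$ land inside $\{1,\dots,\ell\}$: injectivity alone does not prevent, say, all $m'(i)$ from being $\le L-1$ and all $m(j)$ from being $\ge \ell-L+2$, which would kill every crossing cycle simultaneously. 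Ruling this out is the entire content of the theorem, and it is missing. Until that counting is actually executed (or replaced by a genuine rotation argument), the proposal remains a heuristic outline rather than a proof.
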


In 2016, \v{C}ada et al. in \cite{CKRY}
obtained a result on rainbow cycles of length at least four.

\begin{theorem}[\cite{CKRY}]\label{th14}
Let $G$ be an edge-colored graph of order $n$.
If for each vertex $v$ of $G$, $d^c(v)>\frac{n}{2}+2$,
then $G$ contains a rainbow cycle of length at least four.
\end{theorem}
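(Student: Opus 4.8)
The plan is to argue by contradiction. Assume $G$ has no rainbow cycle of length at least four, and fix a longest rainbow path $P=v_1v_2\cdots v_p$; write $a_i=c(v_iv_{i+1})$ for $1\le i\le p-1$, so that $a_1,\dots,a_{p-1}$ are pairwise distinct. Two elementary observations carry the first part. The \emph{extension observation}: if $u\notin V(P)$ and $uv_1\in E(G)$, then $c(uv_1)\in\{a_1,\dots,a_{p-1}\}$, for otherwise $uv_1v_2\cdots v_p$ is a longer rainbow path; and likewise at $v_p$. The \emph{short-chord observation}: if $4\le i\le p$ and $v_1v_i\in E(G)$, then $c(v_1v_i)\in\{a_1,\dots,a_{i-1}\}$, since otherwise $v_1v_2\cdots v_iv_1$ is a rainbow cycle of length $i\ge4$; symmetrically, $v_pv_j\in E(G)$ with $1\le j\le p-3$ forces $c(v_pv_j)\in\{a_j,\dots,a_{p-1}\}$. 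The only edge at $v_1$ not addressed by these is $v_1v_3$, so every color at $v_1$ lies in $\{a_1,\dots,a_{p-1}\}\cup\{c(v_1v_3)\}$; hence $d^c(v_1)\le p$, and likewise $d^c(v_p)\le p$. By hypothesis this forces $p>\frac n2+2$, so $P$ omits fewer than $\frac n2-2$ vertices, and more than $\frac n2+1$ of the colors at $v_1$ (and at $v_p$) already occur among $a_1,\dots,a_{p-1}$.

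Next I would extract the constraints the absence of long rainbow cycles puts on chord colors. Writing $c(v_1v_i)=a_{f(i)}$ with $f(i)\le i-1$, the cycle $v_1v_iv_{i+1}\cdots v_{i'}v_1$ (of length $i'-i+2\ge4$ when $i'\ge i+2$) forces $f(i')\ge i$ or $f(i')=f(i)$ for all such pairs of chords; a mirror statement holds at $v_p$, with $c(v_pv_j)=a_{g(j)}$ and $g(j)\ge j$. The target is now a rainbow $4$-cycle, and the key candidate uses a pair of common neighbours of $v_1$ and $v_p$ on $P$: if $v_1v_k,v_1v_{k'},v_pv_k,v_pv_{k'}\in E(G)$ with $4\le k<k'\le p-3$, then $v_1v_{k'}v_pv_kv_1$ is a $4$-cycle whose colors are $a_{f(k')},a_{g(k')},a_{g(k)},a_{f(k)}$, and since $f(k),f(k')<k\le g(k),g(k')$ one checks it is rainbow unless $f(k)=f(k')$, $g(k)=g(k')$, or $f(k')=g(k)$. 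I would combine this with the longer cycles $v_1v_pv_kv_{k-1}\cdots v_2v_1$ and $v_1v_pv_{p-1}\cdots v_kv_1$, available when $v_1v_p\in E(G)$ (which in turn constrain $c(v_1v_p)$ to equal $c(v_1v_k)$ or $c(v_pv_k)$), and with the lower bound on the number of distinct chord-colors forced at $v_1$ and at $v_p$, to exclude every ``bad'' coincidence and so produce a rainbow $4$-cycle --- the contradiction.

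The crux is this final balancing act. The chord constraints above are \emph{weak}: they allow many coincidences among the $a_{f(i)}$ and $a_{g(j)}$, so forcing a genuine rainbow $4$-cycle requires playing the large number of distinct chord-colors demanded by $d^c(v)>\frac n2+2$ against the coincidences the constraints permit, and this is exactly where the threshold is spent --- so the estimates must be essentially tight. A clean argument will likely also need \emph{rotations} of $P$: passing to an equally long rainbow path with a new endpoint (for example $v_{i-1}v_{i-2}\cdots v_1v_iv_{i+1}\cdots v_p$ when $c(v_1v_i)=a_{i-1}$) in order to control $d^c$ at vertices other than $v_1,v_p$. Finally, I expect the genuinely delicate case to be when $P$ is not Hamiltonian: then the counting that produced common neighbours on $P$ need not apply, and one must instead take a vertex $w\in V(G)\setminus V(P)$, use that $w$ has large color-degree while all its edges into $\{v_1,v_p\}$ --- and, less obviously, its remaining edges into $V(P)$ --- are color-restricted, and look for a rainbow $4$-cycle through $w$ and two vertices of $P$. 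Nailing down the colors of edges between $V(P)$ and $V(G)\setminus V(P)$ precisely is the step I anticipate to be the main obstacle.
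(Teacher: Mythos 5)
Your submission is a plan rather than a proof, and the step you yourself defer as ``the main obstacle'' --- pinning down the colors of edges between $V(P)$ and $V(G)\setminus V(P)$ and then closing the count --- is precisely where this statement is in trouble. Section 3 of this paper is devoted to showing that the published proof of this very theorem in \cite{CKRY} fails at exactly that point: writing $C_0$ for the colors that both endpoints of a longest rainbow path send only into $V(P^C)$, and $C_1,C_2$ for the colors that only $u_1$, respectively only $u_p$, send there, the original argument asserts $|V(P^C)|\ge |C_0|+|C_1|+|C_2|-1$, but only $|V(P^C)|\ge |C_0|+|C_1|+|C_2|-2$ can be justified: a vertex of $P^C$ may simultaneously witness a $C_0$-color at $u_1$ and a different $C_2$-color at $u_p$, and the absence of a rainbow $C_4$ only limits, rather than forbids, this double counting. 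The resulting loss of an additive $1$ costs $\tfrac12$ in the color-degree threshold, which is why the theorem is restated here with $d^c(v)>\frac{n+5}{2}$ in place of $d^c(v)>\frac{n}{2}+2$. So your remark that ``the estimates must be essentially tight'' is correct in the worst way: along this route there is no slack at the stated threshold, and the deferred step cannot be closed. The gap in your proposal is genuine, and it is exactly the known one.

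Two smaller points. In the rainbow-$C_4$ candidate $v_1v_{k'}v_pv_kv_1$ you assert $f(k')<k$, but your own chord constraint only gives $f(k')=f(k)$ or $f(k')\ge k$, so $f(k')=g(k')$, and $f(k')=g(k)$ in the regime $f(k')\ge k$, are further coincidences you would have to exclude. And the non-Hamiltonian case is not a side case: the bound $d^c(v_1)\le p$ only yields $p>\frac{n}{2}+2$, so $V(P^C)$ may contain nearly half of the vertices, and the entire weight of the argument (in \cite{CKRY} and in the corrected version here) rests on the exterior counting rather than on common neighbours of $v_1$ and $v_p$ on $P$.
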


At the end of their paper \cite{CKRY}, they raised the following conjecture.

\begin{conj}[\cite{CKRY}]\label{conj1}
Let $G$ be an edge-colored graph of order $n$ and $k$ be a positive integer.
If for each vertex $v$ of $G$, $d^c(v)>\frac{n+k}{2}$,
then $G$ contains a rainbow cycle of length at least $k$.
\end{conj}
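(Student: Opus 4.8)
The plan is to analyze a longest rainbow path and to convert it into a long rainbow cycle by a color-sensitive version of the P\'osa rotation--extension method. Let $P=v_0v_1\cdots v_\ell$ be a longest rainbow path in $G$; then $C(P)=\{c(v_{i-1}v_i):1\le i\le\ell\}$ has exactly $\ell$ colors, and by the maximality of $P$ every edge incident with an endpoint of $P$ whose color lies outside $C(P)$ must go to a vertex of $P$ (otherwise $P$ could be extended). The cases $k=3$ and $k=4$ are already covered by Theorems \ref{th11} and \ref{th14}, so we may assume $k\ge 5$ and argue by contradiction: suppose $G$ has no rainbow cycle of length at least $k$.

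The first step is an endpoint dichotomy. For the endpoint $v_0$, call an index $i$ with $2\le i\le\ell$ and $v_0v_i\in E(G)$ a \emph{closing index} if $c(v_0v_i)\notin\{c(v_0v_1),\dots,c(v_{i-1}v_i)\}$, since then the cycle $v_0v_1\cdots v_iv_0$ is rainbow of length $i+1$. If some closing index satisfies $i\ge k-1$ we are done, so assume every closing index is at most $k-2$. In particular every color of $v_0$ outside $C(P)$ is witnessed by an edge $v_0v_i$ with $i\le k-2$ (such an edge is automatically a closing chord, as its color avoids $C(P)$), so the number of colors of $v_0$ outside $C(P)$ is at most $k-3$, whence the number inside $C(P)$ is at least $d^c(v_0)-(k-3)$. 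Combining this with $|C(P)|=\ell$ and the symmetric statement at $v_\ell$ forces
\[
\ell\ \ge\ d^c(v_0)-(k-3)\ >\ \frac{n+k}{2}-k+3\ =\ \frac{n-k}{2}+3 ,
\]
so under our assumption $P$ is long, and the new-colored neighbors of $v_0$ (resp.\ $v_\ell$) are confined to the initial window $\{v_2,\dots,v_{k-2}\}$ (resp.\ the terminal window $\{v_{\ell-k+2},\dots,v_{\ell-2}\}$), which are disjoint in the relevant range $\ell>2k-4$.

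The second step is rotation--extension. Each new-colored chord $v_0v_i$ yields, by deleting $v_{i-1}v_i$ and inserting $v_0v_i$, another longest rainbow path $v_{i-1}v_{i-2}\cdots v_0v_iv_{i+1}\cdots v_\ell$ whose new endpoint is $v_{i-1}$; crucially the inserted color lies outside $C(P)$, so rainbowness is preserved for free. Iterating the rotation at the moving endpoint, I would build a set $S$ of vertices each serving as an endpoint of a longest rainbow path on $V(P)$, and then apply a pigeonhole/interval argument: the color-degree condition should force some endpoint of $S$ to carry a rainbow closing chord spanning a long arc of $P$, hence a rainbow cycle of length at least $k$. Since a long $P$ carries few colors outside $C(P)$, this step must also exploit \emph{old}-colored chords $v_\ell v_j$ with $c(v_\ell v_j)\in C(P)$ whose repeated color lies off the segment actually traversed, and must keep track of which colors have been spent along the successive rotations.

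The main obstacle is precisely the bookkeeping of repeated (old) colors. Unlike the uncolored P\'osa method, a rotation or a closing chord can fail to be rainbow because its color already appears on the segment it must traverse, and a single color can simultaneously block many candidate cycles; controlling how many indices are blocked in each window is exactly where the threshold $\frac{n+k}{2}$ must enter, and showing that the color-degree surplus always beats the number of blocked indices is the crux. A secondary difficulty is that combining two endpoint chords into one cycle tends to produce crossing chords yielding short cycles, so the long cycle must be extracted from a single chord off a rotated endpoint rather than from two chords at once. For these reasons the full conjecture remains open; the complete-graph statement (iii) of this paper, where the host structure is rigid enough to control the blocked colors, is what the present method can currently push through, while the general case would require a sharper accounting of old-color collisions than is presently available.
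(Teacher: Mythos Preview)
The statement you are addressing is Conjecture~\ref{conj1}, which the paper does \emph{not} prove; it is quoted from \cite{CKRY} as an open problem, and the paper's contribution toward it is only the complete-graph variant, Theorem~\ref{0}. There is therefore no ``paper's own proof'' to compare against. You yourself recognise this in your final paragraph, where you concede that ``the full conjecture remains open'' and that only the complete-graph case can be pushed through. In that sense your write-up is not a proof proposal at all but a heuristic discussion of why the rotation--extension approach stalls, and on that point you and the paper are in agreement.

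Two remarks on the content of the sketch itself. First, invoking Theorem~\ref{th14} to dispose of the case $k=4$ is unsafe: one of the points of the present paper is precisely that the proof of Theorem~\ref{th14} in \cite{CKRY} contains a gap, and the corrected version requires $d^c(v)>\frac{n+5}{2}$ rather than $\frac{n+4}{2}$, so it does not cover the conjectured threshold for $k=4$. Second, your endpoint dichotomy and the bound $\ell>\frac{n-k}{2}+3$ are correct and are morally the same first step the paper uses (via Lemma~\ref{lem10} and Theorem~\ref{th7}) to get a long rainbow path; the genuine obstruction, as you correctly identify, is the accounting of old-color collisions when one tries to close the path into a long rainbow cycle, and neither your sketch nor the paper resolves this in the general (non-complete) setting.
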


Inspired by Theorem \ref{th14}, Tangjai in \cite{W} proved the following result.

\begin{theorem}[\cite{W}]\label{1}
Let $G$ be an edge-colored graph of order $n$ and $k$ be a positive integer.
If $G$ has no rainbow cycle of length 4 and $\delta^c(G)\geq \frac{n+3k-2}{2}$,
then $G$ contains a rainbow cycle of length at least $k$,
where $k\geq 5$.
\end{theorem}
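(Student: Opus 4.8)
I would proceed by contradiction, so assume $\delta^c(G)\ge\frac{n+3k-2}{2}$, that $G$ has no rainbow $C_4$, and that $G$ has no rainbow cycle of length at least $k$. Two quick observations: since $\delta^c(G)\le n-1$ we get $n\ge 3k$, and since $k\ge 5$ every vertex has $d^c(v)>\frac n2+2$, so by Theorem~\ref{th14} $G$ has a rainbow cycle of length at least $4$, which — as $G$ has no rainbow $C_4$ — is in fact a rainbow cycle of length at least $5$. Fix a \emph{longest} rainbow cycle $C=v_1v_2\cdots v_\ell v_1$ of $G$; by the contradiction hypothesis $5\le\ell\le k-1$, and I shall derive a contradiction by producing a rainbow cycle of $G$ longer than $C$. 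Put $R:=V(G)\setminus V(C)$, so $|R|=n-\ell\ge n-k+1$.

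The crux of my plan is to pass to the induced subgraph $G[R]$. For $w\in R$, call an edge $wv_i$ \emph{fresh} if $c(wv_i)\notin C(C)$. If $w$ had two fresh edges $wv_i,wv_{i+1}$ to consecutive vertices of $C$ with $c(wv_i)\ne c(wv_{i+1})$, then replacing the edge $v_iv_{i+1}$ of $C$ by the path $v_iwv_{i+1}$ would yield a rainbow cycle of length $\ell+1$, a contradiction; hence any two fresh edges from $w$ to consecutive vertices of $C$ have the same color. Likewise, if $w$ has fresh edges $wv_i$ and $wv_{i+2}$, the $4$-cycle $wv_iv_{i+1}v_{i+2}w$ cannot be rainbow, and since its two cycle edges $v_iv_{i+1},v_{i+1}v_{i+2}$ carry distinct colors of $C(C)$ while the fresh edges carry colors outside $C(C)$, the only possible coincidence forces $c(wv_i)=c(wv_{i+2})$. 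Chaining these two facts around $C$ shows that the fresh edges from $w$ contribute at most $\frac\ell2$ colors (typically just one, unless the set of $C$-neighbours of $w$ has several ``gaps'' of length $\ge 2$). As the non-fresh edges from $w$ to $C$ carry colors of $C(C)$, of which there are only $\ell$, the edges from $w$ to $V(C)$ altogether carry at most $\frac{3\ell}{2}$ colors. Therefore the color-degree of $w$ inside $G[R]$ is at least $\delta^c(G)-\frac{3\ell}{2}\ge\frac{n+3k-2}{2}-\frac{3(k-1)}{2}=\frac{n+1}{2}$, so $\delta^c(G[R])\ge\frac{n+1}{2}$; since $|R|=n-\ell$ this says $\delta^c(G[R])\ge\frac{|R|+\ell+1}{2}$ — comfortably more than half of $|V(G[R])|$.

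It then suffices to show that $G[R]$ contains a rainbow cycle of length at least $\ell+1$, since this is a rainbow cycle of $G$ longer than $C$, the desired contradiction. Theorem~\ref{th14} applied to $G[R]$ (which has no rainbow $C_4$) already yields a rainbow cycle of length at least $5$, using $\ell\ge 5$ to check $\frac{n+1}{2}>\frac{|R|}{2}+2$; but one needs length at least $\ell+1$. The naive route is to take a longest rainbow cycle $C'$ of $G[R]$, assume $|C'|\le\ell$, and repeat the analysis with $C'$ inside $G[R]$; this fails because each such step costs about $\frac{3\ell}{2}$ in the color-degree lower bound while raising the relevant ``triangle-type'' threshold by only about $\frac\ell2$, so the surplus is exhausted after one step.

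The main obstacle, then, is to replace this naive iteration by an argument that uses the surplus $\delta^c(G[R])-\frac{|R|}{2}\ge\frac{\ell+1}{2}$ all at once — exploiting once more that $G[R]$ has no rainbow $C_4$ (so that the insertion and $4$-cycle tricks above remain available inside $G[R]$, and any rainbow cycle of length $\ge 4$ there already has length $\ge 5$) — to force a rainbow cycle of length at least $\ell+1$. I expect the cleanest implementation to hinge on a careful choice of extremal object, for instance a longest rainbow cycle $C'$ of $G[R]$ together with a longest rainbow path hanging off it in $G[R]-V(C')$, which can be spliced into $C'$ to enlarge it once $C'$ is short; and the constants must be tracked sharply enough that the threshold $\frac{n+3k-2}{2}$ is hit exactly rather than a weaker bound. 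The small cases — most notably $k=5$ — would then be checked by hand, and it is at this final step, and in the estimate ``fresh edges contribute at most $\frac\ell2$ colors'', that I would look first for the inaccuracies alluded to in the abstract.
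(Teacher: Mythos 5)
Your proposal is not a proof: the decisive step is explicitly left open. After passing to $G[R]$ you obtain $\delta^c(G[R])\ge\frac{|R|+\ell+1}{2}$ and then need a rainbow cycle of length at least $\ell+1$ in $G[R]$. But that is exactly an instance of Conjecture \ref{conj1} (restricted to rainbow-$C_4$-free graphs), i.e.\ a statement at least as strong as the one you are trying to prove; and as you yourself compute, the surplus $\frac{\ell+1}{2}$ over $\frac{|R|}{2}$ is far below the $\frac{3\ell+1}{2}$ that would be needed to re-apply the theorem with $k'=\ell+1$, so the reduction cannot be iterated and makes no net progress. The preliminary estimates are essentially sound (in fact the ``fresh'' edges from $w$ to $V(C)$ contribute at most about $\ell/3$ colors, not $\ell/2$, since clusters of fresh endpoints must be separated by cyclic gaps of length at least $3$), but sharpening them does not close the gap: the whole difficulty of the theorem has been relocated into the unproved final step, for which you only offer a speculative plan.

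For comparison: this statement is quoted from Tangjai's paper and is not proved here at all. The argument there (and for Theorem \ref{th14}) works with a \emph{longest rainbow path} $P=u_1\cdots u_p$, classifying the colors on edges from the two endpoints to segments of $P$ and to $P^C$ and counting — a quite different scheme from your longest-rainbow-cycle extension. Moreover, Section 3 of the present paper identifies a genuine error in that published proof (the inequality $|V(P^C)|\ge|C_0|+|C_1|+|C_2|-1$ should be $|V(P^C)|\ge|C_0|+|C_1|+|C_2|-2$), and consequently only the weaker version with hypothesis $\delta^c(G)>\frac{n+3k-3}{2}$ is actually established. So the statement as given should be treated as unproved in its stated form; your instinct that the trouble lies in a delicate constant is right, but your sketch neither repairs it nor supplies an alternative proof.
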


However, we found some gaps in the proofs of Theorems \ref{th14} and \ref{1},
for which we will give our corrections in Section 3. We will show the following
result.

\begin{theorem}\label{0}
Let $G$ be an edge colored complete graph of order $n$
and $k$ be a positive integer larger than 5.
If $n\geq 8k-18$ and $\delta^c(G)> \frac{n-1}{2}+k$,
then $G$ contains a rainbow cycle of length at least $k$.
\end{theorem}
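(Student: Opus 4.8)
The plan is to suppose for contradiction that $G$ has no rainbow cycle of length at least $k$, fix a longest rainbow path $P=v_0v_1\cdots v_p$, and write $R=V(G)\setminus V(P)$. Since $G$ is complete, maximality of $P$ gives $c(v_0x)\in C(P)$ and $c(v_px)\in C(P)$ for every $x\in R$ (otherwise $P$ would extend), and the contradiction hypothesis gives, for every $i$ with $k-1\le i\le p$, that $c(v_0v_i)\in C(v_0Pv_i)$ (otherwise $v_0v_1\cdots v_iv_0$ would be a rainbow cycle of length $i+1\ge k$), and symmetrically $c(v_pv_j)\in C(v_jPv_p)$ for $0\le j\le p-k+1$. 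A crude count first shows $p\ge k-1$, so these ranges are nonempty. Since $|C(P)|=p$, the only edges at $v_0$ whose colour can lie outside $C(P)$ are $v_0v_2,\dots,v_0v_{k-2}$; hence $d^c(v_0)\le p+(k-3)$, and with $\delta^c(G)>\frac{n-1}{2}+k$ this forces $p>\frac{n-1}{2}+3$. Together with $n\ge 8k-18$ this yields $p\ge 4k-6$ and $|R|=n-1-p<\frac{n-7}{2}$, so $P$ covers more than half of $V(G)$ and is long relative to $k$.

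For the main step, for each $i$ with $k-1\le i\le p$ let $\phi(i)$ be the unique index with $c(v_0v_i)=c(v_{\phi(i)-1}v_{\phi(i)})$, and for each $j$ with $0\le j\le p-k+1$ let $\psi(j)$ be the index with $c(v_pv_j)=c(v_{\psi(j)}v_{\psi(j)+1})$; note $\phi(i)\le i$ and $\psi(j)\ge j$. I would then search for indices $0\le j<i\le p$ with $i-j\le p-k+2$ for which the cycle
$$v_0\,v_i\,v_{i+1}\cdots v_p\,v_j\,v_{j-1}\cdots v_1\,v_0$$
is rainbow. Its length is $p-i+j+2\ge k$, and since the colours along $P$ are pairwise distinct, a short check shows this cycle is rainbow exactly when $\phi(i)>j$, $\psi(j)<i$, and $\phi(i)\ne\psi(j)+1$. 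The heart of the argument, which I expect to be the real obstacle, is to prove that such a pair must exist: if no admissible pair satisfies all three conditions, then for every admissible pair one of $\phi(i)\le j$, $\psi(j)\ge i$, $\phi(i)=\psi(j)+1$ holds, and by letting $j$ range over small values and $i$ over the interval $[k-1,\,p+2-k]$ (and by the symmetric argument at the other end), this rigidity forces $v_0$ and $v_p$ to repeat colours on long blocks of consecutive chords $v_0v_i$, $v_pv_j$; converting this loss into an upper bound on $d^c(v_0)$ and $d^c(v_p)$ should contradict $\delta^c(G)>\frac{n-1}{2}+k$ precisely once $p\ge 4k-6$ and $|R|<\frac{n-7}{2}$, which is where $n\ge 8k-18$ enters.

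Two points need extra care. First, the chords $v_0v_i$ with $2\le i\le k-2$ and the symmetric chords at $v_p$ are not controlled by the cycle hypothesis; here I would use that whenever $c(v_0v_i)\notin C(P)$ the rotated path $v_{i-1}v_{i-2}\cdots v_0v_iv_{i+1}\cdots v_p$ is again a longest rainbow path, so the whole analysis applies with new endpoint $v_{i-1}$, which lets one absorb these exceptional chords; this is also one place where completeness of $G$ is essential, since it guarantees all the chords used above are present. Second, I would verify the degenerate instances of the displayed cycle, namely $i=p$ (the segment $v_i\cdots v_p$ reduces to a single vertex), $j=0$, and $k$ near the bottom of its range, and confirm that the intervals of admissible pairs $(i,j)$ are nonempty under $n\ge 8k-18$.
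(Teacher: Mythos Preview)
Your opening is correct and in one respect cleaner than the paper: the elementary bound $d^c(v_0)\le p+(k-3)$, giving $p>\tfrac{n-1}{2}+3\ge 4k-6$, bypasses the external rainbow-path result (Theorem~\ref{th7}) that the paper invokes only to get $p\ge 3k-5$. Your cycle $v_0v_i\cdots v_pv_j\cdots v_1v_0$ and the conditions $\phi(i)>j$, $\psi(j)<i$, $\phi(i)\ne\psi(j)+1$ are also correctly identified; this is essentially the content of Lemma~\ref{lem12}.

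The genuine gap is precisely the step you flag as ``the real obstacle'': you do not prove it, and the mechanism you describe (``rigidity forces $v_0$ and $v_p$ to repeat colours on long blocks'') is not sharp enough to reach the constant $\tfrac{n-1}{2}+k$. The paper's substitute is a concrete structural lemma you are missing (Lemma~\ref{lem13}): from the single chord $u_1u_p$ it deduces that $c(u_1u_i)=c(u_1u_p)$ for \emph{every} $i$ with $k\le i\le p-(k-2)$ (or the symmetric statement at $u_p$). In your language this says $\phi$ is constant on that whole middle interval, which is far stronger than what pairwise collision arguments give and is what makes the final count close. Your proposal also underestimates the work needed off the path: the paper partitions the new colours at $u_1,u_p$ toward $P^C$ into sets $C_0,C_1,C_2$ and proves a separate claim (Claim~\ref{c6}) bounding by~$2$ the vertices $x\in P^C$ with $c(u_1x)\ne c(u_px)$ and both colours outside $C(P)$; only then can one write $|V(P^C)|\ge |C_0|+|C_1|+|C_2|-3$ and add it to the on-path estimate. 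Finally, the rotation trick you propose for the short chords $v_0v_i$, $2\le i\le k-2$, is unnecessary and would complicate matters: the paper simply absorbs these into the crude bound $|A_2|\le k-3$ and carries that loss through the inequalities.
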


In order to prove our main result, we need the following result of Li and Chen
in \cite{CL} on the existence of long rainbow paths.

\begin{theorem}[\cite{CL}]\label{th7}
Let $G$ be an edge-colored graph, where $\delta^c(G)\geq t\geq 7$. Then
the maximum length of rainbow paths in $G$ is at least $\lceil\frac{2t}{3}\rceil+1$.
\end{theorem}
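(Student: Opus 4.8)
The plan is to argue by contradiction from a longest rainbow path and to exploit its two endpoints simultaneously. Let $P=u_1u_2\cdots u_p$ be a rainbow path of maximum length $\ell=p-1$ in $G$, and suppose for contradiction that $\ell\le\lceil 2t/3\rceil$. Since $P$ is rainbow, $|C(P)|=\ell$. The first observation is that no fresh color can leave $P$ from an endpoint: if $u_1w\in E(G)$ with $w\notin V(P)$ and $c(u_1w)\notin C(P)$, then $wu_1Pu_p$ is a longer rainbow path, a contradiction; likewise at $u_p$. Consequently, writing $I_1=\{i:u_1u_i\in E(G),\,c(u_1u_i)\notin C(P)\}$ and $I_p=\{j:u_pu_j\in E(G),\,c(u_pu_j)\notin C(P)\}$ for the indices reached by fresh chords, every fresh color at $u_1$ (resp. $u_p$) is realized by a chord onto $P$; since the at most $\ell$ colors of $C(P)$ account for at most $\ell$ of the $d^c(u_1)\ge t$ colors at $u_1$, I get $|I_1|\ge t-\ell$ and symmetrically $|I_p|\ge t-\ell$, with $I_1\subseteq\{3,\dots,p\}$ and $I_p\subseteq\{1,\dots,p-2\}$.

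Next I would force a \emph{crossing} of fresh chords. The shifted sets $I_1$ and $I_p+1:=\{j+1:j\in I_p\}$ both lie in $\{2,3,\dots,p\}$, a set of size $\ell$, and $|I_1|+|I_p+1|\ge 2(t-\ell)$. Hence if $\ell<2t/3$ they intersect, producing an index $i$ with fresh chords $u_1u_i$ and $u_pu_{i-1}$; the closed walk $u_1u_iu_{i+1}\cdots u_pu_{i-1}u_{i-2}\cdots u_1$ is then a cycle spanning $V(P)$ using the colors $C(P)\setminus\{c(u_{i-1}u_i)\}$ together with the two fresh colors, so after selecting the crossing pair to have distinct fresh colors it is a rainbow cycle $C$ of length $\ell+1$. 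The only remaining possibility under $\ell\le\lceil 2t/3\rceil$ is the boundary case where $I_1$ and $I_p+1$ fail to meet; a counting refinement using $2\notin I_1$, $p\notin I_p+1$ and $t\ge 7$ then forces $p\in I_1$, i.e. $u_1u_p$ is itself a fresh chord, and $P+u_1u_p$ is already a rainbow cycle of length $\ell+1$ on $V(P)$. Either way I obtain a rainbow spanning cycle $C$ of $V(P)$.

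Finally I would turn $C$ into a longer rainbow path. First, $V(G)\ne V(P)$: otherwise every color-degree is at most $|V(P)|-1=\ell\le\lceil 2t/3\rceil<t$, contradicting $\delta^c(G)\ge t$. So there is an external vertex, and cutting $C$ at any $x\in V(C)$ yields, for each of the two cycle-edges at $x$, a rainbow path of length $\ell$ ending at $x$. If some external $w$ has an edge $wx$ whose color is one of the two cycle colors at $x$ (or lies outside $C(C)$), I cut $C$ so as to delete that color and prepend $w$, getting a rainbow path of length $\ell+1$ and contradicting the maximality of $\ell$.

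The hard part is exactly this last extension when every external edge meets $C$ in a \emph{distant} cycle color (one not incident to the attachment vertex), since then no single cut of $C$ admits $w$. To handle it I would use that each vertex of $C$ still carries at least $t-\ell-1\ge 1$ fresh chords (fresh relative to $C$), and rotate $C$ along such a chord into a different rainbow spanning cycle in which the offending color has been repositioned to be incident to the attachment vertex; the slack guaranteed by $t\ge 7$ is what makes enough fresh chords available to carry out these repositionings and to clear the boundary pigeonhole above. Controlling the colors through these cycle rotations is the main obstacle, and is where the bulk of the casework will lie.
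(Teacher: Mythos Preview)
The paper does not prove this theorem; it is quoted from Chen and Li \cite{CL} and invoked as a black box in the proof of Theorem~\ref{0} to guarantee that a longest rainbow path has length at least $3k-5$. There is therefore no in-paper argument to compare your proposal against.

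On its own merits, your proposal is a plan whose decisive step is left open. The opening moves are sound: fresh colors at an endpoint must land back on $P$, and the counts $|I_1|,|I_p|\ge t-\ell$ follow. But several points are incomplete. First, the pigeonhole for a crossing fails precisely at the boundary $\ell=\lceil 2t/3\rceil$ when $t\equiv 2\pmod 3$: with $p\notin I_1$ and $1\notin I_p$ both sets live in $\{3,\dots,p-1\}$ of size $\ell-2$, and there $2(t-\ell)=\ell-2$, so disjointness is possible; your one-line ``counting refinement'' does not dispose of this. Second, you assert one can select a crossing pair with $c(u_1u_i)\ne c(u_pu_{i-1})$, but this needs an argument, since nothing so far rules out all crossings sharing a single fresh color. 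Most importantly, you yourself flag the real difficulty: once a rainbow cycle $C$ spanning $V(P)$ is in hand, extending it to a longer rainbow path through an external vertex $w$ requires that some edge $wx$ carry either a new color or one of the two $C$-colors incident to $x$, and when every external edge carries a ``distant'' cycle color your proposed fix is to rotate $C$ along further fresh chords until the offending color migrates next to $x$. You close by saying this is ``where the bulk of the casework will lie''; that casework is the substance of the theorem, and it is absent here. As written, this is an outline, not a proof.
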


In the following sections, we will give the proofs of our three results, Theorems \ref{th21}, \ref{th22}
and \ref{0}, as well as the corrections of Theorems \ref{th14} and \ref{1}.

\section{Proofs of Theorems \ref{th21} and \ref{th22} }

To present the proof of Theorems \ref{th21} and \ref{th22},
we need some auxiliary lemmas.
In an edge-colored graph $G$,
a subset $A$ of $V$ is said to have the \emph{dependence property}
with respect to a vertex $v\notin A$ (denoted $DP_v$)
if $c(aa')\in \{c(va),c(va')\}$ for all $aa'\in E(G[A])$.

\begin{lemma}\label{lem21}
If a subset $A$ of vertices of an edge-colored graph $G$ has $DP_v$,
then there exists a vertex $x\in A$ such that
the number of colors of edges incident with $x$ in $G[A]$ which
are different from $c(vx)$ is at most $\frac{|A|-1}{2}$.
\end{lemma}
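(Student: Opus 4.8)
The plan is to reduce the statement to a short averaging argument once the dependence property is exploited. Write $f(a)=c(va)$ for each $a\in A$, and for $x\in A$ let $m(x)$ denote the quantity to be bounded, namely the number of distinct colors among the edges of $G[A]$ incident with $x$ that differ from $c(vx)=f(x)$; the goal is to exhibit some $x\in A$ with $m(x)\le\frac{|A|-1}{2}$. First I would note that, by $DP_v$, every such color is essentially forced: if $xa\in E(G[A])$ with $c(xa)\ne f(x)$, then $c(xa)\in\{f(x),f(a)\}$ forces $c(xa)=f(a)$, and moreover $f(a)\ne f(x)$. Hence each color counted by $m(x)$ has the form $f(a)$ for some neighbour $a$ of $x$ in $G[A]$ with $c(xa)=f(a)\ne f(x)$. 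Collecting these neighbours into a set $D(x)\subseteq N_{G[A]}(x)$ gives $m(x)\le|D(x)|$.

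The key step is the observation that each edge of $G[A]$ lies in at most one of the sets $D(\cdot)$: for a fixed edge $xa\in E(G[A])$, one cannot have both $a\in D(x)$ and $x\in D(a)$, since the former forces $c(xa)=f(a)$ and the latter forces $c(xa)=f(x)$, whence $f(x)=f(a)$, contradicting the requirement $f(a)\ne f(x)$ built into the definition of $D(x)$. Therefore $\sum_{x\in A}|D(x)|\le|E(G[A])|\le\binom{|A|}{2}$, and combining with the previous paragraph, $\sum_{x\in A}m(x)\le\frac{|A|(|A|-1)}{2}$. Averaging over the $|A|$ vertices of $A$ yields a vertex $x$ with $m(x)\le\frac{|A|-1}{2}$, which is exactly the claim.

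I do not expect a genuine obstacle in this argument. The only point needing a little care is that $m(x)$ counts colors rather than edges incident with $x$, which is handled cleanly by the inequality $m(x)\le|D(x)|$; the real content is the one-line observation that the dependence property makes every edge of $G[A]$ ``responsible'' for at most one of its two endpoints, after which the bound falls out of counting edges and averaging.
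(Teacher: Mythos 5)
Your proof is correct and is essentially the paper's argument in different clothing: your sets $D(x)$ are exactly the out-neighbourhoods in the paper's orientation of $G[A]$ (orient $xy$ away from $x$ when $c(xy)\ne c(vx)$), and your observation that each edge lies in at most one $D(\cdot)$ is the statement that this orientation is well defined, after which both proofs finish by the same averaging/minimum-out-degree count $\sum_x|D(x)|\le\binom{|A|}{2}$.
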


\begin{proof}
We prove this lemma by an oriented graph.
Orient the edges in $E(G[A])$ by applying the following rule:
for an edge $xy$,
if $c(xy)=c(vx)$, then the orientation of $xy$ is from $y$
to $x$;
otherwise, the orientation of $xy$ is from $x$ to $y$.
Thus, we get an oriented graph $D(A)$.
Apparently, the arcs with colors different from $c(vx)$
are out-arcs from $x$.
Let $x_0$ be a vertex in $D(A)$ with minimum out-degree.
Clearly, $d^+_{D(A)}(x_0)\leq \frac{|A|-1}{2}$.
Thus, the number of colors of edges incident with $x_0$ in $G[A]$
which are different from $c(vx_0)$ is at most $\frac{|A|-1}{2}$.

\end{proof}

Let $T$ be a subgraph of $G$ and
$C(G)\setminus C(T)$ be the set of colors not appearing on $T$.
Set $N_{C(G)\setminus C(T)}(v)=\{u~|~u\in N^c(v), c(uv)\notin C(T)\}$
and $S=N_{C(G)\setminus C(T)}(v)\cap N_{C(G)\setminus C(T)}(u)$ for $uv\in E(T)$.
Then we give the following lemma.

\begin{lemma}\label{lem22}
Let $G$ be an edge-colored graph of order $n\geq 3$
and $T$ be a $K_2$ or a rainbow triangle in $G$.
Then $S\neq \emptyset$
and
$$ |S|\geq\left\{
\begin{array}{rcl}
2\delta^c(G)-n-3,       &      & \mbox{$T$ is $C_3$}\\
2\delta^c(G)-n,     &      &  \mbox{$T$ is $K_2$}.
\end{array} \right. $$
\end{lemma}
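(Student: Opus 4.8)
The plan is a short inclusion--exclusion count. Fix an edge $uv\in E(T)$ and write $A=N_{C(G)\setminus C(T)}(v)$ and $B=N_{C(G)\setminus C(T)}(u)$, so that $S=A\cap B$. The first step is to bound $|A|$ and $|B|$ from below. By the definition of $N^c(v)$, the edges joining $v$ to $N^c(v)$ realize exactly $d^c_G(v)$ distinct colors, each exactly once; deleting the vertices whose edge to $v$ has a color in $C(T)$ therefore removes at most $|C(T)|$ vertices, so $|A|\geq d^c_G(v)-|C(T)|\geq \delta^c(G)-|C(T)|$, and symmetrically $|B|\geq \delta^c(G)-|C(T)|$. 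Here $|C(T)|=1$ if $T$ is a $K_2$ and $|C(T)|=3$ if $T$ is a rainbow triangle.

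The second step is to confine $A$ and $B$ to a small ground set. Every edge of $T$ incident with $v$ has its color in $C(T)$, so no vertex of $V(T)\setminus\{v\}$ lies in $A$; combined with $v\notin N^c(v)$ this gives $A\subseteq V(G)\setminus V(T)$, and likewise $B\subseteq V(G)\setminus V(T)$. Hence $|A\cup B|\leq n-|V(T)|$, with $|V(T)|=2$ or $3$.

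Putting the two steps together,
$$|S|=|A|+|B|-|A\cup B|\geq 2\bigl(\delta^c(G)-|C(T)|\bigr)-\bigl(n-|V(T)|\bigr),$$
which evaluates to $2\delta^c(G)-n$ when $T$ is a $K_2$ (here $|C(T)|=1$, $|V(T)|=2$) and to $2\delta^c(G)-n-3$ when $T$ is a rainbow triangle (here $|C(T)|=3$, $|V(T)|=3$), as claimed. For $S\neq\emptyset$ it suffices that this lower bound be positive, which holds in the situations where Lemma \ref{lem22} is applied (for instance, $\delta^c(G)>\frac{3n-3}{4}$ yields $2\delta^c(G)-n-3>\frac{n-9}{2}$).

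There is no genuine obstacle here; the only points needing care are (a) excluding all of $V(T)$ -- not merely $u$ and $v$ -- from $A$ and $B$, since this is exactly what separates the $C_3$ bound from the $K_2$ bound, and (b) making clear that the assertion $S\neq\emptyset$ is not a property of arbitrary edge-colored graphs but a consequence of the color-degree hypothesis under which the lemma is invoked.
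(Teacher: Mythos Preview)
Your argument is correct and is essentially the paper's own proof: both use inclusion--exclusion on $A=N_{C(G)\setminus C(T)}(v)$ and $B=N_{C(G)\setminus C(T)}(u)$ inside the ground set $V(G)\setminus V(T)$, together with $|A|,|B|\geq \delta^c(G)-|C(T)|$. Your observation (b) that $S\neq\emptyset$ requires the ambient color-degree hypothesis is apt; the paper's proof likewise invokes $\delta^c(G)>n/2$ at that step even though it is not part of the lemma's stated hypotheses.
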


\begin{proof}
Since $T$ is a $K_2$ or a rainbow triangle,
we have $S\in V(T^C)$.
Clearly, $|S|\geq |N_{C(T^C)}(v)|+|N_{C(T^C))}(u)|-(n-|T|)$.
Since
$$ |N_{C(T^C)}(v)|\geq\left\{
\begin{array}{rcl}
\delta^c(G)-3,       &      & \mbox{$T$ is $C_3$}\\
\delta^c(G)-1,     &      &  \mbox{$T$ is $K_2$},
\end{array} \right. $$\\
and $\delta^c(G)>\frac{n}{2}$,
we can get the result.

\end{proof}

\textbf{Proof of Theorem \ref{th21}}:
Let $G$ be a graph satisfying the assumptions of Theorem \ref{th21} and
suppose, to the contrary, that there exists a vertex $v$ such that no rainbow triangle
contains it.
Let $u\in N^c(v)$,
$T=G[\{u,v\}]$
and $S=N_{C(G)\setminus C(T)}(v)\cap N_{C(G)\setminus C(T)}(u)$.
According to Lemma \ref{lem22},
$S\neq \emptyset$.
For any edge $e=xy\in E(G[S])$, we have $c(xy)\in\{c(vx),c(vy)\}$;
otherwise $xyv$ is a rainbow triangle.
Clearly, $S$ has the dependence property with respect to $v$.
According to Lemma \ref{lem21},
there is a vertex $x_0\in S$ such that
the number of colors of edges incident with $x_0$ in $G[S]$
which are different from $c(vx_0)$ is at most $\frac{|S|-1}{2}$.
Since $c(vx_0)=c(ux_0)$ (otherwise $uvx_0$ is a rainbow triangle),
we have $|N^c(x_0)\cap (S\cup \{u,v\})|\leq \frac{|S|+1}{2}$.
Thus, $N^c(x_0)\cap (V(G)\setminus (S\cup \{u,v\}))\geq \delta^c(G)-\frac{|S|+1}{2}$.
So we have $n-|S|-2\geq \delta^c(G)-\frac{|S|+1}{2}$.
Then $2n-2\delta^c(G)-3\geq |S|$.
As $|S| \geq 2\delta^c(G)-n$ from Lemma \ref{lem22}, we have
$\delta^c(G)\leq \frac{3n-3}{4}$.
This contradiction completes the proof.

$\hfill\qedsymbol$

\textbf{Proof of Theorem \ref{th22}}:
Let $G$ be a graph satisfying the assumptions of Theorem \ref{th22} and
suppose, to the contrary, that there exists a vertex $v$ such that no rainbow $C_4$
contains it.
From Theorem \ref{th21}, there is a rainbow triangle $uvw$.
Let $S=N_{C(G)\setminus C(T)}(v)\cap N_{C(G)\setminus C(T)}(u)$.
According to Lemma \ref{lem22},
$S\neq \emptyset$.
For any vertex $x\in S$,
$c(vx)=c(ux)$;
since else $xuwvx$ is a rainbow $C_4$.
For any edge $e=xy\in E(G[S])$,
$c(vx)\neq c(uy)$;
else, since $c(vx)\neq c(vy)$, $c(vx)\neq c(ux)$, a contradiction.
Then $c(xy)\in \{c(ux),c(vy),c(uv)\}$;
otherwise $uvxy$ is a rainbow $C_4$.
We construct a new edge-colored subgraph $G'[S]$
by removing all the edges that have color $c(uv)$ in $E(G[S])$.
Then for any edge $e=xy\in E(G'[S])$,
$c(xy)\in \{c(ux),c(vy)\}=\{c(vx),c(vy)\}$.
Clearly, $S$ has the dependence property with respect to $v$ in $G'[S]$.
According to Lemma \ref{lem21},
there is a vertex $x_0$ in $S$ such that
the number of colors of edges incident with $x_0$ in $G[S]$
which are different from $c(vx_0)$ and $c(uv)$
is at most $\frac{|S|-1}{2}$.
If $w\in N^c(x_0)$
and $c(wx_0)\notin\{c(wu),c(uv),c(vx_0)\}$,
$wuvx_0w$ is a rainbow $C_4$;
if $c(wx_0)=c(wu)$, $wx_0vuw$ is a rainbow $C_4$.
Thus $c(wx_0)\in \{c(uv),c(vx_0)\}$.
Note that $c(u x_0)=c(v x_0)$.
Then $|N^c(x_0)\cap (S\cup \{u,v,w\})|\leq \frac{|S|-1}{2}+2$.
Thus, $N^c(x_0)\cap (V(G)\setminus (S\cup \{u,v,w\}))\geq \delta^c(G)-\frac{|S|-1}{2}-2$.
So, we have $n-|S|-3\geq \delta^c(G)-\frac{|S|-1}{2}-2$.
Then, $2n-2\delta^c(G)\geq |S|$.
As $|S| \geq 2\delta^c(G)-n-3$ from Lemma \ref{lem22}, we have
$\delta^c(G)\leq \frac{3n}{4}$.
This contradiction completes the proof.

$\hfill\qedsymbol$

\section{Proof of Theorem \ref{0}}

To present the proof of Theorem \ref{0},
we need some auxiliary theorems and lemmas.
Lemmas \ref{lem10} and \ref{lem12} are used to prove Theorem \ref{th14}.
We will use them to prove our theorem.

\begin{lemma}[\cite{CKRY}]\label{lem10}
Let $G$ be an edge-colored graph of order $n$
and $P=u_1u_2\cdots u_p$ be a longest rainbow path in $G$.
If $G$ contains no rainbow cycle of length at least $k$,
where $k\leq p$, then for any color $a\in C(u_1,u_kPu_p)$
and vertex $u_i\in V(u_kPu_p)$, where $c(u_1u_i)=a$,
there is an edge $e\in E(u_1Pu_i)$ such that $c(e)=a$.
\end{lemma}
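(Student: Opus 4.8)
The plan is to argue by contradiction, exploiting the maximality of $P$ as a longest rainbow path. Suppose there is a color $a\in C(u_1,u_kPu_p)$ and a vertex $u_i\in V(u_kPu_p)$ with $c(u_1u_i)=a$, but such that $a$ appears on no edge of the initial segment $u_1Pu_i=u_1u_2\cdots u_i$. Then the segment $u_1Pu_i$ together with the chord $u_1u_i$ would form a cycle, and since $P$ is rainbow and $a\notin C(u_1Pu_i)$, this cycle $u_1u_2\cdots u_i u_1$ is rainbow. Its length is $i\ge k$ because $u_i\in V(u_kPu_p)$, which directly contradicts the assumption that $G$ has no rainbow cycle of length at least $k$.

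More carefully, I would first fix notation: write $P=u_1u_2\cdots u_p$, let $a$, $u_i$ be as in the statement, and set $Q=u_1u_2\cdots u_i$, the sub-path of $P$ from $u_1$ to $u_i$. Since $P$ is rainbow, $Q$ is rainbow and all colors on $E(Q)$ are pairwise distinct. The hypothesis to be contradicted is that $c(e)\ne a$ for every $e\in E(Q)$. Next I would form the closed walk $D=u_1u_2\cdots u_i u_1$ obtained by adding the edge $u_iu_1$ to $Q$; because $u_1,\dots,u_i$ are distinct vertices of $P$, $D$ is in fact a cycle. Then I would check that $D$ is rainbow: the edges of $Q$ carry distinct colors (sub-path of a rainbow path), none of which equals $a=c(u_1u_i)$ by assumption, and the single extra edge $u_1u_i$ has the color $a$; hence all $i$ edges of $D$ have pairwise distinct colors. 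Finally, $\ell(D)=i$, and since $u_i\in V(u_kPu_p)$ we have $i\ge k$, so $D$ is a rainbow cycle of length at least $k$ in $G$, contradicting the hypothesis of the lemma.

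There is essentially no hard step here: the only subtlety is the bookkeeping that $D$ really is a cycle (which needs the vertices $u_1,\dots,u_i$ to be distinct — immediate since they lie on a path) and that the color $a$ of the chord is genuinely new to the segment, which is exactly the negation of the conclusion we are trying to establish. The maximality of $P$ is in fact not needed for this particular lemma; it is only the "no long rainbow cycle" hypothesis that does the work. Thus the proof is a short direct contradiction argument, and I would present it in three or four lines.
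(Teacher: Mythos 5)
Your proof is correct and is the standard argument for this lemma (which the paper cites from \cite{CKRY} without reproducing a proof): negating the conclusion makes $u_1Pu_iu_1$ a rainbow cycle of length $i\ge k$, contradicting the hypothesis. Your observation that only the ``no long rainbow cycle'' assumption is used, not the maximality of $P$, is also accurate.
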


Similarly, we have the following lemma.
\begin{lemma}\label{lem11}
Let $G$ be an edge-colored graph of order $n$
and $P=u_1u_2\cdots u_p$ be a longest rainbow path in $G$.
If $G$ contains no rainbow cycle of length at least $k$,
where $k\leq p$, then for any positive integers $s,t$ with $t\geq s+(k-1)$,
we have $c(u_su_t)\in C(u_sPu_t)$.
\end{lemma}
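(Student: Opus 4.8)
The plan is to imitate the style of Lemma \ref{lem10}, which records how colors on a ``long chord'' $u_1u_i$ must already appear on the path segment $u_1Pu_i$. Here we want the analogous statement for an arbitrary chord $u_su_t$ with $t\ge s+(k-1)$, namely that its color already occurs on $u_sPu_t$. So suppose, to the contrary, that $c(u_su_t)\notin C(u_sPu_t)$. Since $P$ is rainbow, the segment $u_sPu_t$ is a rainbow path, and adjoining the chord $u_su_t$ would close it into a cycle; the hypothesis $c(u_su_t)\notin C(u_sPu_t)$ guarantees this cycle $u_sPu_tu_s$ is rainbow. Its length is $t-s+1\ge k$, contradicting the assumption that $G$ has no rainbow cycle of length at least $k$.

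That is essentially the whole argument, so the real content is just to verify the two things that make it work: first, that $u_sPu_t$ together with the edge $u_tu_s$ is genuinely a cycle of length $t-s+1$ (this uses only $s<t$, which follows from $t\ge s+(k-1)$ and $k\ge 2$, and the fact that $P$ is a path so its vertices are distinct); and second, that no repeated color arises — the only possible repetition is between $c(u_su_t)$ and some edge of $u_sPu_t$, which is exactly what we assumed does not happen, while the edges of $u_sPu_t$ are pairwise distinctly colored because they lie on the rainbow path $P$. Hence the contradiction stands and $c(u_su_t)\in C(u_sPu_t)$.

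There is no real obstacle here; the statement is a direct ``rainbow cycle avoidance'' observation, strictly simpler than Lemma \ref{lem10} (it does not even need the maximality of $P$, only that $P$ is a rainbow path containing $u_s$ and $u_t$ at the prescribed distance). The one point to state carefully is the length bound $t-s+1\ge k$, which is immediate from $t\ge s+(k-1)$. I would present the proof in three sentences along exactly the lines above.
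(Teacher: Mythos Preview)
Your proof is correct and is exactly the argument the paper has in mind: the paper does not spell out a proof of this lemma but merely introduces it with ``Similarly, we have the following lemma,'' indicating the same direct rainbow-cycle contradiction you gave. Your observation that maximality of $P$ is not actually needed here is also accurate.
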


\begin{lemma}[\cite{CKRY}]\label{lem12}
Let $G$ be an edge-colored graph of order $n$
and $P=u_1u_2\cdots u_p$ be a longest rainbow path in $G$.
If $G$ contains no rainbow cycle of length at least $k$,
where $k\leq p$, then for any positive integers $s,t$ such that $s+t=k$,
we have $|C(u_1,u_kPu_{p-(t-1)})\cap C(u_p,u_sPu_{p-(k-1)})|\leq 1$.
\end{lemma}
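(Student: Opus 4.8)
The plan is to argue by contradiction: assume the intersection $C(u_1,u_kPu_{p-(t-1)})\cap C(u_p,u_sPu_{p-(k-1)})$ contains two distinct colors, say color $1$ and color $2$, and manufacture from them a rainbow cycle of length at least $k$, contradicting the hypothesis. For $i\in\{1,2\}$ I would fix an edge $u_1u_{x_i}$ of color $i$ with $x_i\in[k,\,p-t+1]$ and an edge $u_pu_{y_i}$ of color $i$ with $y_i\in[s,\,p-k+1]$; both exist exactly because color $i$ lies in the two color sets. The first step is to locate where colors $1$ and $2$ sit on $P$. Since $P$ is a longest rainbow path, each color occurs at most once on it; write $e_i=u_{m_i}u_{m_i+1}$ for the edge of $P$ of color $i$, if present. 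Applying Lemma \ref{lem10} to the chord $u_1u_{x_i}$ produces an edge of color $i$ inside $u_1Pu_{x_i}$, whence $m_i\le x_i-1$; applying Lemma \ref{lem10} to the reversed path (again a longest rainbow path, and the ``no long rainbow cycle'' hypothesis is symmetric) to the chord $u_pu_{y_i}$ produces an edge of color $i$ inside $u_{y_i}Pu_p$, whence $m_i\ge y_i$. Thus $y_i\le m_i\le x_i-1$; in particular $e_i$ genuinely occurs and $y_i<x_i$.

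Next I record the one cycle template I will use. For a chord $u_1u_x$ with $x\le p-t+1$ and a chord $u_pu_y$ with $y\ge s$ and $y<x$, the closed walk $u_1Pu_y+u_yu_p+u_pPu_x+u_xu_1$ is a genuine cycle, because its two path segments $u_1\cdots u_y$ and $u_x\cdots u_p$ are vertex-disjoint, and its length is $y+(p-x)+1\ge s+(t-1)+1=k$. So any such ``crossing'' cycle automatically has the required length, and the whole game reduces to keeping the two chord colors off the two path segments that are actually used.

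With this in hand I would relabel the two colors so that $x_1\le x_2$ and split into two cases according to the position of $m_2$ relative to $y_1$. If $m_2\ge y_1$, I take the chords $u_1u_{x_2}$ (color $2$) and $u_pu_{y_1}$ (color $1$): crossing holds since $y_1\le m_1\le x_1-1\le x_2-1<x_2$, color $2$ has its path copy at $m_2\in[y_1,\,x_2-1]$ which misses both segments $u_1\cdots u_{y_1}$ and $u_{x_2}\cdots u_p$, and color $1$, sitting at $m_1$ with $y_1\le m_1\le x_1-1<x_2$, misses them as well. If instead $m_2\le y_1-1$, then $y_2\le m_2<y_1\le m_1\le x_1-1$, so $y_2<x_1$, and I take the chords $u_1u_{x_1}$ (color $1$) and $u_pu_{y_2}$ (color $2$); now color $1$ at $m_1\ge y_1>y_2$ and color $2$ at $m_2$ with $y_2\le m_2\le y_1-1<x_1$ are both kept off the segments $u_1\cdots u_{y_2}$ and $u_{x_1}\cdots u_p$. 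In either case the resulting cycle is rainbow of length at least $k$, the desired contradiction, so the intersection has at most one color.

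The genuinely easy ingredients are the length bound above and the mere existence of a crossing cross-color pair: were both crossings to fail one would get $x_1\le y_2<x_2$ and $x_2\le y_1<x_1$ simultaneously, which is absurd. The main obstacle, and the place that needs real care, is the rainbow verification, since Lemma \ref{lem10} \emph{forces} each chord color to reappear somewhere on $P$, and the cycle must be routed so that this reappearance always lands on the skipped portion of the path. The normalization $x_1\le x_2$ together with the dichotomy on $m_2$ versus $y_1$ is precisely what guarantees that one of the two crossing pairs deposits both chord colors on the discarded segments; I would present these inequalities explicitly rather than appeal to symmetry, as that bookkeeping is the crux of the argument.
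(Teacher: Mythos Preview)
Your argument is correct. The paper does not supply its own proof of this lemma; it is quoted from \cite{CKRY} and used as a black box, so there is nothing in the present paper to compare your proposal against. Your contradiction strategy---pin down via Lemma~\ref{lem10} (and its mirror on the reversed path) the unique position $m_i$ of each color on $P$, then select one of the two crossing chord pairs so that both colors land on the discarded middle segment---is sound, and the length count $y+(p-x)+1\ge s+(t-1)+1=k$ is right. The case split on $m_2\ge y_1$ versus $m_2\le y_1-1$ after normalizing $x_1\le x_2$ cleanly forces both $m_1,m_2$ into $[y,\,x-1]$ for the chosen pair $(x,y)$, which is exactly the skipped index range; this is the delicate point and you handled it.
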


\begin{lemma}\label{lem13}
Let $G$ be an edge-colored complete graph of order $n$
and $P=u_1u_2\cdots u_p$ be a longest rainbow path in $G$.
If $G$ contains no rainbow cycle of length at least $k$,
where $2k-1\leq p$, then $c(u_1u_p)=C(u_1,u_kPu_{p-(k-2)})$ or
$c(u_1u_p)=C(u_p,u_{k-2}Pu_{p-(k-1)})$.
\end{lemma}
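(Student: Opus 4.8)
The plan is to exploit the maximality of $P$ together with the completeness of $G$ to pin down where the color $c(u_1u_p)$ can appear. First I would consider the edge $u_1u_p$ and the long rainbow path $P$ of length $p-1$. Since $P$ is a \emph{longest} rainbow path and $G$ is complete, the edge $u_1u_p$ cannot be used to extend $P$ into a longer rainbow path or into a rainbow cycle of length $p$; in particular, if $2k-1\le p$, then $c(u_1u_p)$ must already appear on $P$ (this is the $s=1$, $t=p$ instance of Lemma \ref{lem11}, or a direct application of Lemma \ref{lem10}). So there is a well-defined edge $e_0\in E(P)$ with $c(e_0)=c(u_1u_p)$, say $e_0=u_ju_{j+1}$.

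Next I would locate $e_0$ precisely. Suppose for contradiction that $e_0$ lies strictly between position $k-1$ and position $p-(k-2)$, i.e. $k\le j$ and $j+1\le p-(k-1)$ (the complement of the two claimed cases). Then I would try to build a rainbow cycle of length at least $k$ by using the chord $u_1u_p$ together with a carefully chosen sub-path of $P$ and possibly one extra chord. The key observation is that completeness gives us the chords $u_1u_{j+1}$ and $u_ju_p$ for free; the obstacle is controlling their colors. Here I expect to invoke Lemma \ref{lem10} (applied at both ends, since by symmetry the reverse of $P$ is also a longest rainbow path) to constrain $C(u_1, u_kPu_{p-(k-2)})$ and $C(u_p, u_{k-2}Pu_{p-(k-1)})$: any color on such a chord must reappear on the intervening segment of $P$. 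Combining these color constraints with the fact that $c(u_1u_p)=c(e_0)$ with $e_0$ in the "middle zone" should force one of the candidate cycles
$$u_1\, P\, u_j\, u_p\, P\, u_{j+1}\, u_1 \quad\text{or a variant closing through } u_1u_p,$$
to be rainbow and of length at least $k$, contradicting the hypothesis. The counting that the resulting cycle has length $\ge k$ will come from the bound $2k-1\le p$ and the location of $j$ in the middle interval.

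The main obstacle, and the step requiring the most care, is the color bookkeeping when splicing: even though the chord $u_1u_p$ has a color that appears on $P$ at $e_0$, the sub-path of $P$ we keep in the cycle might itself contain $e_0$, so we must route the cycle so as to \emph{avoid} the segment carrying the duplicated color, and simultaneously ensure the one or two additional chords we use (of the form $u_1u_{j+1}$, $u_ju_p$, or a chord from an endpoint into the middle block) do not repeat a color already on the retained segment. This is exactly where Lemma \ref{lem12}'s "at most one common color" phenomenon and Lemma \ref{lem11} get used: they limit how colors on endpoint-chords can collide with the path, allowing us to pick the splice point to dodge all repeats. Once the middle-zone case is eliminated, $e_0$ must satisfy $j\le k-2$ or $j+1\ge p-(k-2)$, which (reading off the position of $e_0$ as a chord-color witness via Lemma \ref{lem10}) is precisely the statement that $c(u_1u_p)\in C(u_1,u_kPu_{p-(k-2)})$ in the first case or $c(u_1u_p)\in C(u_p,u_{k-2}Pu_{p-(k-1)})$ in the second, and a short argument upgrades membership to the claimed equalities since each of these color sets, under the no-long-rainbow-cycle hypothesis and Lemma \ref{lem10}, turns out to be a singleton.
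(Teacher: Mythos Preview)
Your plan diverges from the paper's argument and contains a real gap. The heart of your proposal is the ``middle-zone elimination'': assuming the edge $e_0=u_ju_{j+1}$ carrying the color $c(u_1u_p)$ lies with $k\le j\le p-k$, you want to splice a rainbow cycle through the chords $u_1u_{j+1}$ and $u_ju_p$. But by Lemma~\ref{lem11} each of these chord colors already lies somewhere on the long segments $C(u_1Pu_{j+1})$ and $C(u_jPu_p)$, and you give no mechanism to avoid \emph{both} repeats at once; Lemmas~\ref{lem10} and~\ref{lem12} do not obviously supply one. You yourself flag this as ``the main obstacle'' and leave it unresolved, so the argument is incomplete at its critical step.

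There is also a logical mismatch at the end. Even if the middle zone were eliminated, you would know only that the \emph{path edge} carrying the color $c(u_1u_p)$ lies in $E(u_1Pu_{k-1})$ or $E(u_{p-(k-2)}Pu_p)$. The lemma, however, asserts that every \emph{chord} $u_1u_i$ with $k\le i\le p-(k-2)$ has color $c(u_1u_p)$ (or the symmetric statement at $u_p$). Lemma~\ref{lem10} goes from chord colors to path edges, not the other way, so ``reading off the position of $e_0$'' does not yield the required membership, let alone the equality.

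The paper's proof is shorter and avoids locating $e_0$ altogether. It looks at the single chord $u_1u_k$: since $c(u_1u_k)\in C(u_1Pu_k)$, the cycle $u_1u_kPu_pu_1$ is rainbow unless $c(u_1u_p)=c(u_1u_k)$ or $c(u_1u_p)\in C(u_kPu_p)$. Symmetrically with $u_pu_{p-(k-1)}$. Because $p\ge 2k-1$ the two segments $C(u_1Pu_k)$ and $C(u_{p-(k-1)}Pu_p)$ are disjoint, forcing $c(u_1u_p)\in\{c(u_1u_k),\,c(u_pu_{p-(k-1)})\}$. In the first case, for every $i$ with $k\le i\le p-(k-2)$ the cycle $u_1u_iPu_pu_1$ has length $\ge k$, and since $c(u_1u_p)\in C(u_1Pu_k)\subseteq C(P)\setminus C(u_iPu_p)$, the only way it can fail to be rainbow is $c(u_1u_i)=c(u_1u_p)$; this gives the full equality $C(u_1,u_kPu_{p-(k-2)})=\{c(u_1u_p)\}$ in one stroke.
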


\begin{proof}
From Lemma \ref{lem10},
we have $c(u_1u_k)\in C(u_1Pu_k)$ and $c(u_pu_{p-(k-1)})\in C(u_{p-(k-1)}Pu_p)$.
Then we have $c(u_1u_p)=c(u_1u_k)$ or $c(u_1u_p)\in C(u_{k}Pu_p)$;
otherwise $u_1u_kPu_pu_1$ is a rainbow cycle of length at least $k$.
Similarly, we have $c(u_1u_p)=c(u_pu_{p-(k-1)})$ or $c(u_1u_p)\in C(u_1Pu_{p-(k-1)})$.
Since $p\geq 2k-1$, we have $c(u_1u_k)\neq c(u_pu_{p-(k-1)})$.
Thus, $c(u_1u_p)=c(u_1u_k)\in C(u_1Pu_k)$
or $c(u_1u_p)=c(u_pu_{p-(k-1)})\in C(u_{p-(k-1)}Pu_p)$.
If $c(u_1u_p)=c(u_1u_k)$,
for any vertex $u_i\in V(u_kPu_{p-(k-2)})$,
we have $c(u_1u_i)=c(u_1u_p)$;
otherwise $u_1u_iPu_pu_1$ is a rainbow cycle of length at least $k$.
By the symmetry,
if $c(u_1u_p)=c(u_pu_{p-(k-1)})$,
we have $c(u_pu_i)=c(u_1u_p)$ for $u_i\in V(u_{k-2}Pu_{p-(k-1)})$.

\end{proof}

\textbf{Proof of Theorem \ref{0}}:
Let $G$ be a graph satisfying the assumptions of Theorem \ref{0}.
Suppose to the contrary, that $G$ contains no rainbow cycle of length at least $k$.
Let $P=u_1u_2\cdots u_p$ be a longest rainbow path in $G$.
Since $n\geq 8k-18$,
from Theorem \ref{th7} it follows that $p\geq 3k-5$.
From Lemma \ref{lem13}, w.l.o.g., suppose that $c(u_1u_p)=C(u_1,u_kPu_{p-(k-2)})$.

\vspace{2mm}
Set

$\begin{array}{lll}
A_1=C(u_1,u_kPu_{p-1}),&A_2=C(u_1,u_2Pu_{k-1}),&\\
B_1=C(u_p,u_{k-2}Pu_{p-(k-1)}),& B_2=C(u_p,u_{2}Pu_{k-3}),& B_3=C(u_p, u_{p-(k-2)}Pu_{p-1}),
\end{array} $\\
and

$\begin{array}{l}
C_0=(C(u_1, P^C)\setminus C(u_1,P))\cap (C(u_p, P^C)\setminus C(u_p,P)),
\end{array} $

$\begin{array}{ll}
C_1=C(u_1,P^C)\setminus (C_0\cup c(u_1,P)),&C_2=C(u_p,P^C)\setminus (C_0\cup c(u_p,P)).
\end{array} $

\begin{claim}\label{c2}
Let $u_s,u_t\in V(P)$ with $\ell(u_sPu_t)\geq 2k-3$.
Then for any pair of vertices $u_a$ and $u_b$,
if $k-1 \leq \ell(u_aPu_b)\leq \ell(u_sPu_t)-(k-2)$ and $c(u_su_t)\in C(u_aPu_b)$,
we have $c(u_au_b)=c(u_su_t)$.
\end{claim}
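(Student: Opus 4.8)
The plan is to run a chord-substitution argument by contradiction: assume $c(u_au_b)\neq c(u_su_t)$, build a rainbow cycle of length at least $k$, and thereby contradict the standing assumption that $G$ has no rainbow cycle of length at least $k$. The first step is to localise the two colours on $P$. Applying Lemma~\ref{lem11} with the index pair $a,b$ (legitimate since $\ell(u_aPu_b)\geq k-1$) shows that the unique edge of $P$ coloured $c(u_au_b)$ lies in $u_aPu_b$; applying it with $s,t$ (legitimate since $\ell(u_sPu_t)\geq 2k-3\geq k-1$) shows that the unique edge of $P$ coloured $c(u_su_t)$ lies in $u_sPu_t$, while the hypothesis $c(u_su_t)\in C(u_aPu_b)$ places that same edge in $u_aPu_b$ as well. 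Since $P$ is rainbow and, by assumption, $c(u_au_b)\neq c(u_su_t)$, these are two distinct edges, both in the interior of $u_aPu_b$.

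Next I would fix the configuration and exhibit the cycle. Because the edge of $P$ coloured $c(u_su_t)$ lies in $E(u_aPu_b)\cap E(u_sPu_t)$, the two segments overlap, and the bound $\ell(u_aPu_b)\leq\ell(u_sPu_t)-(k-2)$ rules out $u_sPu_t\subseteq u_aPu_b$; moreover, in the way the claim is applied the vertices $u_a,u_b$ lie on $u_sPu_t$, so after renaming we may assume $s\leq a<b\leq t$. As $G$ is complete, $u_au_b$ and $u_su_t$ are edges, and I would take
$$ D \;=\; u_sPu_au_bPu_tu_s , $$
i.e.\ go along $P$ from $u_s$ to $u_a$, then along the chord $u_au_b$, then along $P$ from $u_b$ to $u_t$, then along the chord $u_tu_s$. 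The edges of $D$ that lie on $P$ are exactly those of $u_sPu_a$ together with those of $u_bPu_t$; these form edge-disjoint sub-paths of $P$, disjoint from $u_aPu_b$, so since $P$ is rainbow their colours are pairwise distinct and, crucially, none of them is $c(u_au_b)$ or $c(u_su_t)$, because those two colours were localised inside $u_aPu_b$. Together with $c(u_au_b)\neq c(u_su_t)$, this makes $D$ rainbow.

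It then remains to bound the length: $\ell(D)=\ell(u_sPu_a)+1+\ell(u_bPu_t)+1=\bigl(\ell(u_sPu_t)-\ell(u_aPu_b)\bigr)+2\geq(k-2)+2=k$, using the upper bound on $\ell(u_aPu_b)$. Also $D$ is a genuine cycle: its vertex set is the disjoint union of the index intervals $[s,a]$ and $[b,t]$ (disjoint since $a<b$), with $|V(D)|=\ell(D)\geq k>5$. Hence $D$ is a rainbow cycle of length at least $k$, the desired contradiction, and so $c(u_au_b)=c(u_su_t)$.

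I expect the only real difficulty to be the positional reduction in the second paragraph. To prove the statement in full generality one must also treat the overhang configurations: for instance when $a<s<b\leq t$, the redrawn cycle consisting of the chord $u_au_b$, then $u_bPu_t$, then the chord $u_tu_s$, then $u_s$ back down to $u_a$ along $P$ is rainbow as soon as the $P$-edge coloured $c(u_au_b)$ sits in $u_sPu_b$; the residual sub-case, where that edge lies in $u_aPu_s$, must be handled by routing the cycle through a long auxiliary chord and invoking Lemma~\ref{lem11} once more to control its colour, using completeness of $G$. In the applications inside the proof of Theorem~\ref{0} the endpoints $u_a,u_b$ always lie on $u_sPu_t$, so only the clean case $s\leq a<b\leq t$ occurs and the steps above suffice.
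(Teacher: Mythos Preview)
Your argument is correct and is essentially the paper's own proof: the paper also invokes Lemma~\ref{lem11} to place $c(u_au_b)$ in $C(u_aPu_b)$ and then exhibits the very same cycle $u_sPu_au_bPu_tu_s$ as a rainbow cycle of length at least $k$ under the assumption $c(u_au_b)\neq c(u_su_t)$. Your write-up is simply more explicit---you spell out the rainbow check, the length count, and the positional assumption $s\le a<b\le t$ that the paper leaves tacit (and, as you note, is the only configuration needed in the later applications).
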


\begin{proof}
Since $\ell(u_aPu_b)\geq k-1$, from Lemma \ref{lem11}
we have $c(u_au_b)\in C(u_aPu_b)$.
If $c(u_au_b)\neq c(u_su_t)$,
then $u_sPu_au_bPu_tu_s$ is a rainbow cycle of length at least $k$,
see Figure \ref{fig1}, a contradiction.

\end{proof}

\begin{figure}[htbp]
  \centering
 \scalebox{1}{\includegraphics[width=2.0in,height=0.5in]{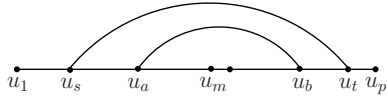}}\\
 \captionsetup{font={scriptsize}}
  \caption{For Claim 1}\label{fig1}
\end{figure}

\begin{claim}\label{c1}
$|A_1\cap B_1|\leq 1$.
\end{claim}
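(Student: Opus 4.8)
The plan is to derive Claim~\ref{c1} directly from Lemma~\ref{lem12}. We are in the situation of the proof of Theorem~\ref{0}: $G$ is a complete edge-colored graph with no rainbow cycle of length at least $k$, $P=u_1u_2\cdots u_p$ is a longest rainbow path, and $p\ge 3k-5$, so in particular $p\ge k$ and Lemma~\ref{lem12} is applicable for any positive integers $s,t$ with $s+t=k$. I would choose $s=k-2$ and $t=2$; since $k>5$ these are positive integers, and $s+t=k$, so the hypotheses of Lemma~\ref{lem12} are met.

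The next step is purely a matter of unwinding notation. Substituting $t=2$ into the first set of Lemma~\ref{lem12} gives $C(u_1,u_kPu_{p-(t-1)})=C(u_1,u_kPu_{p-1})$, which is exactly $A_1$; substituting $s=k-2$ into the second set gives $C(u_p,u_sPu_{p-(k-1)})=C(u_p,u_{k-2}Pu_{p-(k-1)})$, which is exactly $B_1$. Here the bound $p\ge 3k-5$ also guarantees that the segments $u_kPu_{p-1}$ and $u_{k-2}Pu_{p-(k-1)}$ are nonempty and overlap as they should. Lemma~\ref{lem12} then yields $|A_1\cap B_1|=|C(u_1,u_kPu_{p-1})\cap C(u_p,u_{k-2}Pu_{p-(k-1)})|\le 1$, which is the claim.

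Thus the only real work is careful index bookkeeping --- making sure the generic ranges in Lemma~\ref{lem12} specialize verbatim to the definitions of $A_1$ and $B_1$ --- and I would write out the substitution explicitly for that reason. Without Lemma~\ref{lem12} in hand one would instead argue by contradiction from the existence of two distinct colors $a\ne b$ in $A_1\cap B_1$, choosing witnesses on $P$, splicing two chords with subpaths of $P$ into a cycle of length at least $k$, and using Lemma~\ref{lem10} and the structure of $P$ to see that the cycle is rainbow; keeping track of exactly where $a$ and $b$ reappear along $P$ so that no color repeats is the delicate point there, and it is precisely what Lemma~\ref{lem12} packages. That is why reducing to that lemma is the route I would take, and I expect the index matching, rather than any genuine combinatorics, to be the only place where care is required.
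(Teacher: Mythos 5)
Your proof is correct and is exactly the paper's argument: the paper also proves this claim by applying Lemma~\ref{lem12} with $t=2$ and $s=k-2$ and observing that the two color sets specialize to $A_1$ and $B_1$. Your additional index bookkeeping is just a more explicit write-up of the same step.
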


\begin{proof}
From Lemma \ref{lem12},
setting $t=2$ and $s=k-2$, the claim follows obviously.

\end{proof}

\begin{claim}\label{c3}
$|A_1\cap B_2\setminus (B_1\cup \{c(u_1u_p)\})|\leq 1$.
\end{claim}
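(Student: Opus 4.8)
The plan is to argue by contradiction: suppose that $A_1\cap B_2\setminus(B_1\cup\{c(u_1u_p)\})$ contains two distinct colors $a$ and $a'$, and derive a rainbow cycle of length at least $k$. Write $\gamma=c(u_1u_p)$ and fix vertices with $c(u_1u_i)=c(u_pu_j)=a$ and $c(u_1u_{i'})=c(u_pu_{j'})=a'$. By Lemma \ref{lem13} and our standing assumption $c(u_1u_p)=C(u_1,u_kPu_{p-(k-2)})$ we have $c(u_1u_\ell)=\gamma$ for all $k\le\ell\le p-k+2$, so from $a,a'\ne\gamma$ it follows that $i,i'\in\{p-k+3,\dots,p-1\}$, while $j,j'\in\{2,\dots,k-3\}$ by the definition of $B_2$. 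Since $P$ is rainbow, each of $a,a',\gamma$ appears on a unique edge of $P$, say $u_{\ell_a}u_{\ell_a+1}$, $u_{\ell_{a'}}u_{\ell_{a'}+1}$, $u_{\ell_\gamma}u_{\ell_\gamma+1}$; Lemma \ref{lem11} then gives $\ell_a\in\{j,\dots,i-1\}$, $\ell_{a'}\in\{j',\dots,i'-1\}$ and $\ell_\gamma\le k-1$.

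I would next pin down $\ell_a$ (and symmetrically $\ell_{a'}$) by two applications of Claim \ref{c2}. With the reference chord $u_1u_i$ (of length $i-1\ge 2k-3$) and subsegments $u_1Pu_y$: if $\ell_a\le i-k+1$ then $c(u_1u_y)=a$ for every $y$ in a nonempty range contained in $\{k,\dots,p-k+2\}$, forcing $a=\gamma$, a contradiction; hence $\ell_a\ge i-k+2$. With the reference chord $u_pu_j$ and subsegments $u_xPu_p$: if $\ell_a\ge j+k-2$ then $c(u_xu_p)=a$ for some $x\in\{k-2,\dots,p-k+1\}$, i.e.\ $a\in B_1$, a contradiction; hence $\ell_a\le j+k-3$. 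Thus $k\le\ell_a\le 2k-6$ and likewise $k\le\ell_{a'}\le 2k-6$; moreover $i-j\le 2k-5$ together with $i\ge p-k+3$, $j\le k-3$ gives $p\le 4k-11$, so the claim is automatic when $p\ge 4k-10$ and I may assume from now on that $p$ is small.

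Assume without loss of generality $\ell_a<\ell_{a'}$ and consider the cycle
$$C_1=u_1\,u_i\,Pu_p\,u_{j'}\,Pu_{\ell_a}\,u_1,$$
made of the chord $u_1u_i$, the forward segment $u_iPu_p$, the chord $u_pu_{j'}$, the forward segment $u_{j'}Pu_{\ell_a}$, and the chord $u_{\ell_a}u_1$ (which has color $\gamma$ because $k\le\ell_a\le p-k+2$). It is a simple cycle, as the two path segments occupy positions $[i,p]$ and $[j',\ell_a]$ with $\ell_a<i$. Its length equals $p-i+\ell_a-j'+3\ge p-2k+8\ge k+3$, using $\ell_a\ge i-k+2$ and $j'\le k-3$. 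Its color set is $\{a,a',\gamma\}\cup C(u_iPu_p)\cup C(u_{j'}Pu_{\ell_a})$; the $P$-edges of $a,a',\gamma$ all have position $<i$ and so avoid $C(u_iPu_p)$, and the $P$-edges of $a,a'$ (positions $\ell_a$ and $\ell_{a'}>\ell_a$) avoid $C(u_{j'}Pu_{\ell_a})$. Hence $C_1$ is rainbow unless the $P$-edge of $\gamma$ lies inside $u_{j'}Pu_{\ell_a}$, i.e.\ unless $\ell_\gamma\ge j'$; so if $\ell_\gamma<j'$ we are done.

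The remaining case $\ell_\gamma\ge j'$ is the main obstacle. The difficulty is structural: the chords $u_1u_i,u_1u_{i'}$ and $u_pu_j,u_pu_{j'}$ reach almost across $P$ (their far ends are within $k-2$ of $u_p$, resp.\ within $k-3$ of $u_1$), so the naive ``cross and return'' cycles have length below $k$, while the three $P$-edges carrying $a,a',\gamma$ all sit in a short band near the middle of $P$ and block the long $P$-segments one wants to use. I would handle $\ell_\gamma\ge j'$ by instead using the chord $u_1u_{i'}$ (color $a'$), which frees the color $a$ so that the forward traversal of $P$ may cross the $a$-colored $P$-edge, and by starting that traversal just to the right of the $\gamma$-colored $P$-edge and closing the cycle through a $B_1$-colored chord of $u_p$ together with $u_{\ell_{a'}}u_1$ (color $\gamma$). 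Keeping this cycle both rainbow — by controlling the color of the $B_1$-chord and the position of its $P$-edge — and of length at least $k$ needs a further short split according to whether $\ell_\gamma\ge k-3$ or $\ell_\gamma<k-3$, and this last verification is where the real work lies.
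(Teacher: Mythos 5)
Your argument up to the position constraints is sound and matches the paper's: you correctly locate $i,i'\in[p-k+3,p-1]$, $j,j'\in[2,k-3]$, and you use Claim \ref{c2} twice to pin the $P$-edge of color $a$ into $\ell_a\in[i-k+2,\,j+k-3]$, hence $k\le\ell_a\le 2k-6$ (and likewise for $a'$). But from there you abandon the decisive step and the proof does not close. Your explicit cycle $C_1$ only works when $\ell_\gamma<j'$, and for the remaining case $\ell_\gamma\ge j'$ you offer only a plan (``use $u_1u_{i'}$ instead, close through a $B_1$-chord, split on $\ell_\gamma\ge k-3$ or not'') and state yourself that ``this last verification is where the real work lies.'' That is a genuine gap: the rainbowness and length of the proposed replacement cycle are never verified, and it is not evident they can be, since the $B_1$-chord you want to close with has a color whose $P$-edge is itself unlocated.

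The missing idea is that no cycle construction is needed at this stage: your constraint $k\le\ell_a\le 2k-6$ already places the $P$-edge of color $a$ on the segment $u_{k-2}Pu_{p-(k-2)}$ (using $p\ge 3k-5$), so one further application of Claim \ref{c2} --- with reference chord $u_1u_i$, which has $\ell(u_1Pu_i)=i-1\ge 2k-3$, and the \emph{fixed} pair $(u_{k-2},u_{p-(k-2)})$, whose segment has length $p-2k+4$ lying in $[k-1,\,i-k+1]$ precisely because $i\ge p-k+3$ --- forces $c(u_{k-2}u_{p-(k-2)})=a$. The identical argument gives $c(u_{k-2}u_{p-(k-2)})=a'$, contradicting $a\ne a'$ at once. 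This is the paper's proof; it disposes of both colors through the single edge $u_{k-2}u_{p-(k-2)}$ and makes the entire second half of your write-up (the bound $p\le 4k-11$, the cycle $C_1$, and the unfinished case) unnecessary.
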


\begin{proof}
Suppose that $c(u_1u_t)=c(u_pu_s)=m\in A_1\cap B_2\setminus (B_1\cup \{c(u_1u_p)\}) $.
Then $t\in [p-(k-3),p-1]$ and $s\in [2,k-3]$.
Since $m\notin\{ c(u_pu_{s+(k-2)}),c(u_1u_p)\}$,
we have $m\in C(u_sPu_{s+(k-2)})$
and $m\in C(u_{t-(k-2)}Pu_t)$;
otherwise $u_sPu_{s+(k-2)}u_pu_s$
and $u_1u_{t-(k-2)}Pu_tu_1$ are rainbow cycles of length $k$.
Thus, $m\in C(u_{t-(k-2)}Pu_{s+{k-2}})\subseteq C(u_{k-2}Pu_{p-(k-2)})$.
Since $p\geq 3k-5$,
from Lemma \ref{lem11} and Claim \ref{c2},
$l(u_{k-2}Pu_{p-(k-2)})\geq k-1$.
Hence, $c(u_{k-2}u_{p-(k-2)})=m$,
see Figure \ref{fig2}.

Suppose, to contrary, there is another color $m'\in A_1\cap B_2\setminus (B_1\cup \{c(u_1u_p)\})$
such that $m'\neq m$.
Then $c(u_{k-2}u_{p-(k-2)})=m'$, a contradiction.

\begin{figure}[htbp]
  \centering
 \scalebox{1}{\includegraphics[width=2.5in,height=0.8in]{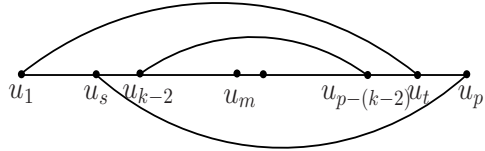}}\\
 \captionsetup{font={scriptsize}}
  \caption{For the proof of Claim 3}\label{fig2}
\end{figure}

\end{proof}

\begin{claim}\label{c6}
Let $D=\{x\in V(P^C): x\in N_{C_1\cup C_0}(u_1,P^C)\cap N_{C_2\cup C_0}(u_p,P^C) ~and~ c(u_1x)\neq c(u_px)\}$.
Then $|D|\leq 2$.
\end{claim}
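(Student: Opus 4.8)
Here is the plan.

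\medskip

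To each $x\in D$ I would attach a pair of edge‑indices of $P$. First, $c(u_1x)$ and $c(u_px)$ must be colors that already occur on $P$: if $c(u_1x)\notin C(P)$ then, as $x\notin V(P)$, the path $xu_1u_2\cdots u_p$ is rainbow and longer than $P$, contradicting the choice of $P$; likewise for $c(u_px)$. So there are indices $j=j(x)$ and $l=l(x)$ with $c(u_1x)=c(u_ju_{j+1})$ and $c(u_px)=c(u_lu_{l+1})$. Since $x\in D$, $c(u_1x)\notin C(u_1,V(P))$ and $c(u_px)\notin C(u_p,V(P))$, which forces $2\le j\le p-1$ and $1\le l\le p-2$, and $c(u_1x)\neq c(u_px)$ gives $j\neq l$. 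Moreover $x\in N^c(u_1)\cap N^c(u_p)$ and the edges joining $u_1$ (resp.\ $u_p$) to $N^c(u_1)$ (resp.\ $N^c(u_p)$) carry pairwise distinct colors, so each of $j(x)$ and $l(x)$ determines $c(u_1x)$ (resp.\ $c(u_px)$) and hence determines $x$; in particular $x\mapsto j(x)$ and $x\mapsto l(x)$ are injective on $D$. It therefore suffices to bound the number of pairs $(j(x),l(x))$ that can occur.

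\medskip

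Next I would exploit the chord $u_1u_p$. By Lemma~\ref{lem13} and the w.l.o.g.\ assumption, $c(u_1u_p)=c(u_1u_i)$ for every $i$ with $k\le i\le p-k+2$, and by Lemma~\ref{lem10} applied to $u_1u_k$ we get $c(u_1u_p)=c(u_mu_{m+1})$ for some $m\le k-1$. Hence for each $i\in[k,p-k+2]$ the path $u_1u_iu_{i+1}\cdots u_p$ is rainbow (its only edges are $u_1u_i$, of color $c(u_mu_{m+1})$, and $u_tu_{t+1}$ for $t\in[i,p-1]$, and $m<k\le i$). Appending $x$ gives the cycle $u_1u_iu_{i+1}\cdots u_pxu_1$; since $c(u_1x),c(u_px)\notin C(u_1,V(P))\cup C(u_p,V(P))$ while $c(u_1u_p)$ lies in both of these sets, one checks this cycle is rainbow exactly when $j(x)<i$ and $l(x)<i$. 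Taking $i=\max\bigl(\max(j(x),l(x))+1,\,k\bigr)$, which lies in $[k,p-k+2]$ whenever $\max(j(x),l(x))\le p-k+1$ (here $p\ge 3k-5$ guarantees $k\le p-k+2$), we get a rainbow cycle of length $p-i+3\ge k+1$, a contradiction. Thus $\max(j(x),l(x))\ge p-k+2$ for every $x\in D$; equivalently, every $x\in D$ satisfies $j(x)\ge p-k+2$ or $l(x)\ge p-k+2$.

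\medskip

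It then remains to prove that $D$ contains at most one vertex with $j(x)\ge p-k+2$ and at most one with $l(x)\ge p-k+2$, which yields $|D|\le 2$. Here I would observe that for $x\in D$ the triangle $u_1xu_p$ is rainbow (its three colors $c(u_1x),c(u_px),c(u_1u_p)$ are pairwise distinct for the same reasons as above), so $x$ behaves like a substitute for the edge $u_1u_p$; given two vertices of $D$ with, say, $j$‑indices $j<j'$ both $\ge p-k+2$, I would re‑route $P$ through both vertices — using the two $u_1$–$x$ edges and the two $u_p$–$x$ edges together with the resulting breaks of $P$ at these late positions, closing the walk by a single chord $u_au_b$ whose color is forced onto a prescribed segment of $P$ by Lemmas~\ref{lem10} and \ref{lem11} — to produce a rainbow cycle of length at least $k$. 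I expect this last step to be the main obstacle: each candidate cycle contains one (or two) chords of a priori unknown colour, so making it genuinely rainbow requires repeatedly invoking Lemmas~\ref{lem10}/\ref{lem11} to pin those colours down, and because the reduction in Lemma~\ref{lem13} treats $u_1$ and $u_p$ asymmetrically, the case ``many vertices with $j(x)$ large'' and the case ``many vertices with $l(x)$ large'' do not follow from one another by symmetry and must be argued separately. Throughout, the bound $n\ge 8k-18$ (hence $p\ge 3k-5$ by Theorem~\ref{th7}) and $k>5$ are exactly what make all the cycles constructed along the way have length at least $k$.
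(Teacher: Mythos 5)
Your first two steps are sound and run parallel to part of the paper's argument: since $c(u_1x)\notin C(u_1,P)$ and $c(u_px)\notin C(u_p,P)$ for $x\in D$, maximality of $P$ forces both colors onto edges of $P$, and the cycles $u_1u_iPu_pxu_1$ with $i\in[k,p-(k-2)]$ (using $c(u_1u_i)=c(u_1u_p)$ from Lemma \ref{lem13}) correctly yield that for every $x\in D$ at least one of $c(u_1x),c(u_px)$ lies on the last $k-2$ edges of $P$. The injectivity of $x\mapsto c(u_1x)$ and $x\mapsto c(u_px)$ on $D$ is also right, since $D\subseteq N^c(u_1)\cap N^c(u_p)$.

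The gap is your third step, which is where the whole content of the bound $|D|\le 2$ lives: you reduce to showing that at most one $x\in D$ has $j(x)\ge p-k+2$ and at most one has $l(x)\ge p-k+2$, but you give no construction for the rainbow cycle that is supposed to arise from two such vertices — only the intention to ``re-route $P$ through both vertices'' and close with a chord whose color is ``forced''. A cycle through two vertices of $P^C$ and a chord $u_au_b$ has several edges of unknown color, and nothing in Lemmas \ref{lem10}--\ref{lem11} pins down $c(u_1x)$ relative to $c(u_px')$ for distinct $x,x'\in D$, so it is not clear this line closes at all. The paper avoids this entirely: it also runs the argument through the \emph{left} end of $P$, using the cycle $u_1Pu_{k-1}u_pxu_1$, to upgrade your one-sided conclusion to a dichotomy locating \emph{both} colors of each $x\in D$ (one among the first $k-2$ edges of $P$, the other among the last $k-2$), valid for every $x$ with $c(u_1x)\ne c(u_pu_{k-1})$. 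Then, since the colors $c(u_1x)$, $x\in D$, are pairwise distinct, $|D|\ge 3$ yields an $x_0$ with $c(u_1x_0)\notin\{c(u_pu_{k-1}),c(u_pu_{p-(k-1)})\}$, and the dichotomy makes $u_1u_kPu_{p-(k-1)}u_px_0u_1$ a rainbow cycle of length $p-2k+5\ge k$ (this is where $p\ge 3k-5$ enters). So only three vertices, not a pairing argument, are needed; to repair your write-up you should either prove your step 3 in full or replace it by this two-ended localization plus the single contradiction cycle.
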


\begin{proof}
Suppose to the contrary, that $|D|\geq 3$.
According to Lemma \ref{lem13},
we have $c(u_1u_{p-(k-2)})=c(u_1u_p)$, and then $c(u_1u_{p-(k-2)})\notin C(u_1,D)\cup C(u_p,D)$.
Since $|D|\geq 2$,
there exists a vertex $x\in D$
such that $c(u_1u_x)\neq c(u_pu_{k-1})$.
Then, $c(u_pu_{k-1}) \notin \{c(u_1x),c(u_p x)\}$.
Thus, we have $c(u_1 x)\in C(u_2Pu_{k-1})$ and $c(u_p x)\in C(u_{p-(k-2)}Pu_{p-1})$
or $c(u_1 x)\in C(u_{p-(k-2)}Pu_{p})$ and $C(u_p x)\in c(u_{1}Pu_{k-2})$;
otherwise, $u_1u_{p-(k-2)}Pu_p xu_1$ or $u_1Pu_{k-1}u_p xu_1$ is a rainbow cycle of length at least $k$, see Figure \ref{fig7}.
Since $|D|\geq 3$,
there exists a vertex $x_0\in D$ such that
$c(u_1x_0)\notin\{c(u_pu_{k-1}), c(u_pu_{p-(k-1)})\}$.
Then, $c(u_1x_0)\in C(u_1Pu_{k-1})\cup C(u_{p-(k-2)}Pu_p)$.
Thus, $u_1u_kPu_{p-(k-1)}u_px_0u_1$
is a rainbow cycle of length at least $k$, see Figure \ref{fig6}, a contradiction.
The proof is thus complete.
\begin{figure}[htbp]
  \centering
 \scalebox{1}{\includegraphics[width=3.5in,height=0.8in]{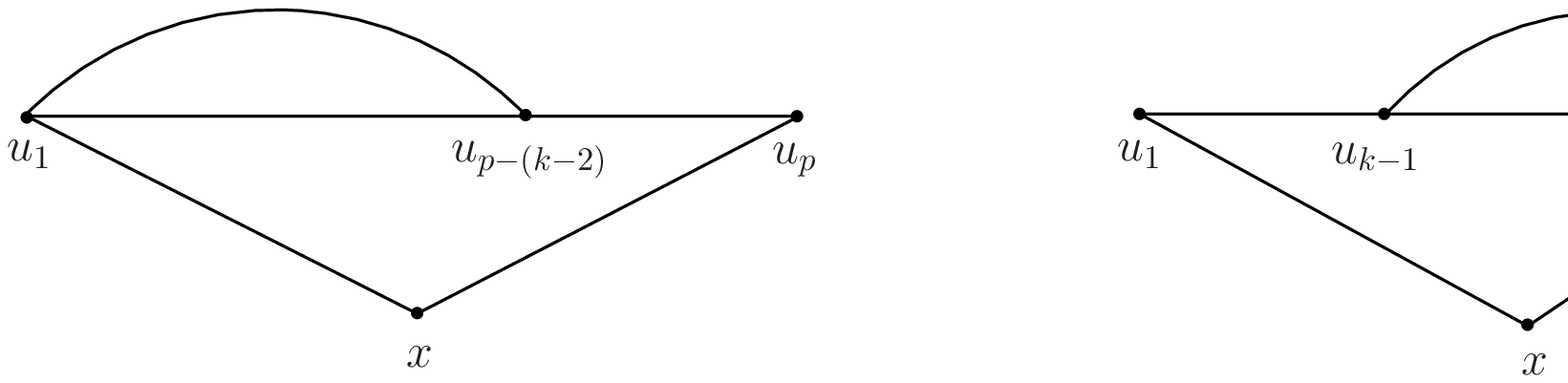}}\\
 \captionsetup{font={scriptsize}}
  \caption{Otherwise in the proof of Claim 4}\label{fig7}
\end{figure}

\begin{figure}[htbp]
  \centering
 \scalebox{1}{\includegraphics[width=2.5in,height=0.8in]{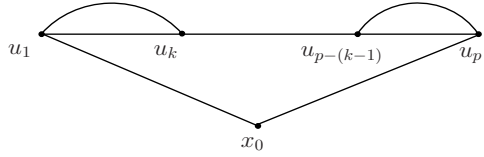}}\\
 \captionsetup{font={scriptsize}}
  \caption{A contradiction in the proof of Claim 4}\label{fig6}
\end{figure}

\end{proof}

It is easy to see that $c(u_1u_2)\notin B_1\cup B_2$
and $c(u_pu_{p-1})\notin A_1$.
However, possibly $c(u_1u_2)\in A_1$
or $c(u_pu_{p-1})\in B_1\cup B_2$.
Thus, we set
\vspace{2mm}

$\varepsilon_1 =
\begin{cases}
    1 & \mbox{if $c(u_1u_2)\notin A_1$}, \\
    0 & \mbox{if $c(u_1u_2)\in A_1$};
\end{cases}$

$\varepsilon_2 =
\begin{cases}
    1 & \mbox{if $c(u_pu_{p-1})\notin B_1\cup B_2$}, \\
    0 & \mbox{if $c(u_pu_{p-1})\in B_1\cup B_2$};
\end{cases}$
\vspace{2mm}

According to Lemma \ref{lem13},
we know that $c(u_1u_p)\in A_1$ and $c(u_1u_p)\in C(u_1Pu_k)$.
Thus, we set

\vspace{2mm}
$\varepsilon_3 =
\begin{cases}
    1 & \mbox{if $c(u_1u_p)\notin B_1$}, \\
    0 & \mbox{if $c(u_1u_{p})\in B_1$};
\end{cases}$

\vspace{2mm}
Then,
\begin{equation}\label{e0}
\begin{array}{ll}
|A_1|+|C_0|+|C_1|+\varepsilon_1&\geq d^c(u_1)-|A_2\setminus \{c(u_1u_2)\})|\geq \delta^c(G)-(k-3),
\end{array}
\end{equation}
\begin{equation}\label{e1}
\begin{array}{ll}
|B_1|+|B_2\setminus (B_1\cup \{c(u_1u_p)\})|
+|C_0|+|C_2|+\varepsilon_2+\varepsilon_3&\\
\geq d^c(u_p)-|B_3\setminus \{c(u_{p}u_{p-1})\})|\geq \delta^c(G)-(k-3).&
\end{array}
\end{equation}

By Claim \ref{c6},
we can get $|N_{C_1}(u_1,P^C)\cap N_{C_2}(u_p,P^C)|\leq 2$.
Furthermore, if there is a vertex $x\in N_{C_2}(u_p,P^C)$
such that $c(u_1x)\in C_0$.
Then there is another distinct vertex $y$
such that $c(u_py)=c(u_1x)$.
If $c(u_1y)\in C_1$,
then there is no other vertex $z\in N_{C_2}(u_p,P^C)$ such that
$c(u_1z)\in C_0$
or $z\in N_{C_1}(u_1,P^C)$ and $c(u_pz)\in C_0$.
Otherwise, $|D|\geq 3$, a contradiction.
Hence, there are at least $|C_0|-1$ vertices in
$\{x\in V(P^C)~|~c(u_1x)=c(u_px)\in C_0\}$.
\begin{equation}\label{e2}
\begin{array}{lll}
|V(P^C)|&\geq & |N_{C_1}(u_1,P^C)\cup N_{C_2}(u_p,P^C)|+|\{x\in V(P^C)~|~c(u_1x)=c(u_px)\in C_0\}|\\
&\geq& |C_1|+|C_2|+|C_0|-2-1.
\end{array}
\end{equation}

Note that for any color $a\in C_0$,
there is an edge in $P$ whose color is $a$;
since else $P$ is not a longest rainbow path.
Then we have
\begin{equation}\label{e3}
\begin{array}{lll}
|V(P)|&=&|E(P)|+1\\
& \geq &|e\in E(P), c(e)\in A_1\cup B_1\cup B_2|+|e\in E(p), c(e)\in C_0|\\
& & +\varepsilon_1+\varepsilon_2+1\\
&\geq &|A_1|+|B_1|+|B_2\setminus (B_1\cup c(u_1u_p))|+\varepsilon_1+\varepsilon_2-1.
\end{array}
\end{equation}

Summarizing (\ref{e2}) and (\ref{e3}), we have
\begin{equation}\label{e4}
\begin{array}{lll}
n&\geq&|V(P)|+|V(P^C)|\\
&\geq &(|A_1|+|C_0|+ |C_1|+\varepsilon_1)+(|B_1|+|B_2\setminus (B_1\cup c(u_1u_p))|+|C_0|+|C_2|+\varepsilon_2)-4,
\end{array}
\end{equation}
Combining with inequalities (\ref{e0}) and (\ref{e1}),
we get that
\begin{equation}\label{e5}
\begin{array}{lll}
n&\geq & 2(\delta^c(G)-(k-3))-\varepsilon_3-4\\
&\geq & 2\delta^c(G)-2k+1.
\end{array}
\end{equation}
Then,
$\delta^c(G)\leq \frac{n-1}{2}+k$,
a contradiction.
The proof is thus complete.

$\hfill\qedsymbol$\\

In the rest of the paper, we analyze the mistake in the proof of Theorem \ref{th14}.
In paper \cite{CKRY},
the authors thought that $|V(P^C)|\geq |C_0|+|C_1|+|C_2|-1$,
where $C_i$, $i=0,1,2$ are of the same definition as in our proof of Theorem \ref{0}.
Note that they proved that
$|N_{C_1}(u_1,P^C)\cap N_{C_2}(u_p,P^C)|\leq 1$
under the condition of rainbow $C_4$-free.
Then the inequality implies that if for any color $a\in C_0$,
there is a vertex $x\in V(P^C)$ such that $c(u_1x)=a$,
one would have $x\notin N_{C_2}(u_p,P^C)$;
or $c(u_px)=a$,
and $x\notin N_{C_1}(u_p,P^C)$.
Actually, we can not prove that the assertion is correct.
However, assume that there are two vertices
$x_i\in V(P^C)$ such that $c(u_1x_i)=a_i\in C_0$ and
$c(u_px_i)=c_i\in C_2$, where $i=1,2$ and $a_1\neq a_2$.
Then there is another vertex $y\in V(P^C)$
such that
$c(u_py)=a_1\in C_0$.
If $c(u_1y)=c_0\in C_1$,
then note that $c_0\neq c_2$.
Then $u_1yu_px_2u_1$ is a rainbow $C_4$, see Figure \ref{fig5},
a contradiction.
Hence, there are at least $|C_0|-1$ vertices in
$\{x\in V(P^C)~|~c(u_1x)\in C_0 \mbox{~and~}c(u_px)\in C_0\}$.
Therefore, $|V(P^C)|\geq |C_0|+|C_1|+|C_2|-2$.
\begin{figure}[htbp]
  \centering
 \scalebox{1}{\includegraphics[width=2.0in,height=1.0in]{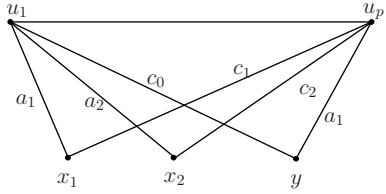}}\\
 \captionsetup{font={scriptsize}}
  \caption{ A rainbow $C_4$}\label{fig5}
\end{figure}

Thus, Theorems \ref{th14} and \ref{1}
can be corrected by the following statements.

\begin{theorem}
Let $G$ be an edge-colored graph of order $n$.
If for every vertex $v$ of $G$, $d^c(v)>\frac{n+5}{2}$,
then $G$ contains a rainbow cycle of length at least four.
\end{theorem}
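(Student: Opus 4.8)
The plan is to reprove Theorem~\ref{th14} by rerunning the argument of \cite{CKRY}, now with the counting slip identified in the discussion above repaired. Assume for contradiction that $G$ has no rainbow cycle of length at least four, and let $P=u_1u_2\cdots u_p$ be a longest rainbow path in $G$. Since $d^c(v)\le n-1$ while $d^c(v)>\frac{n+5}{2}$, we get $n\ge 8$ and hence $\delta^c(G)\ge 7$, so Theorem~\ref{th7} supplies a rainbow path of length $\ge\lceil\tfrac{2\delta^c(G)}{3}\rceil+1$, which is large enough (for $n\ge 9$) that Lemmas~\ref{lem10}--\ref{lem12} apply with $k=4$ and the colour sets below are non-degenerate. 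The only residual order is $n=8$, where the hypothesis forces $G=K_8$ equipped with a proper edge-colouring, and there a short direct argument produces a rainbow $C_4$.

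From here I would introduce exactly the colour classes of \cite{CKRY}'s proof: the sets of colours sent from $u_1$ and $u_p$ onto $P$ (the counterparts of $A_1,A_2,B_1,B_2,B_3$ in the proof of Theorem~\ref{0}), the sets $C_0,C_1,C_2$ of colours sent from $u_1,u_p$ onto $P^C$, and the indicators $\varepsilon_1,\varepsilon_2,\varepsilon_3$. With $k=4$ the relevant auxiliary claims (the counterparts of Claims~\ref{c1}, \ref{c3}, \ref{c6}) go through by the same reasoning --- for the first, Lemma~\ref{lem12} with $s=t=2$ --- since ``no rainbow $C_{\ge 4}$'' is at least as strong as what those arguments need. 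Together with the two colour-degree bounds on $u_1$ and $u_p$ (the analogues of (\ref{e0}) and (\ref{e1}), with $k-3=1$) and the bound on $|V(P)|$ (the analogue of (\ref{e3})), this reduces everything to a single estimate on $|V(P^C)|$.

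The heart of the matter is exactly that estimate, which is where \cite{CKRY} went wrong. In place of their $|V(P^C)|\ge|C_0|+|C_1|+|C_2|-1$ I would establish the corrected $|V(P^C)|\ge|C_0|+|C_1|+|C_2|-2$, by the argument spelled out in the discussion above: rainbow-$C_4$-freeness gives the sharper bound $|N_{C_1}(u_1,P^C)\cap N_{C_2}(u_p,P^C)|\le 1$ (the $C_4$-analogue of Claim~\ref{c6}), and at most one colour of $C_0$ can fail to be witnessed by a vertex $x\in V(P^C)$ with $c(u_1x)=c(u_px)$; summing the two gives the claim. Feeding this into $n\ge|V(P)|+|V(P^C)|$ along with the $u_1,u_p$ bounds and simplifying as in (\ref{e4})--(\ref{e5}) --- but with the $P^C$-term larger by one --- yields $n\ge 2\delta^c(G)-5$, i.e.\ $\delta^c(G)\le\frac{n+5}{2}$, which is precisely one half weaker than the (flawed) original $\tfrac{n}{2}+2$ and contradicts the hypothesis.

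I expect the main obstacle to be making this $|V(P^C)|$ estimate airtight: one must check carefully that the $C_4$-free version of the ``$|D|\le 1$'' claim genuinely holds --- this is exactly the step mishandled in \cite{CKRY} --- and that no additional slack creeps in when passing from ``at least $|C_0|-1$ vertices carry a repeated $C_0$-colour'' to the displayed bound, so that the penalty relative to \cite{CKRY} is exactly one (hence exactly $\tfrac12$ in the colour-degree conclusion). Secondary, routine points are the $\varepsilon_i$-bookkeeping and the disposal of the order $n=8$.
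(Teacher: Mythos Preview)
Your proposal is correct and is essentially the paper's own approach: the paper does not give a standalone proof of this theorem but simply observes that the \cite{CKRY} argument goes through once the erroneous estimate $|V(P^C)|\ge |C_0|+|C_1|+|C_2|-1$ is replaced by the corrected $|V(P^C)|\ge |C_0|+|C_1|+|C_2|-2$, shifting the threshold by exactly $\tfrac12$ to $\tfrac{n+5}{2}$. One caution: the claims you cite by analogy with the proof of Theorem~\ref{0} (in particular Claim~\ref{c3} and the use of Lemma~\ref{lem13}) rely on $G$ being complete, so for general $G$ you should follow the original \cite{CKRY} structure rather than specializing the complete-graph argument to $k=4$; but since your stated plan is precisely to rerun \cite{CKRY} with the fix, this is only a matter of citing the right intermediate claims.
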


\begin{theorem}
Let $G$ be an edge-colored graph of order $n$ and $k$ be a positive integer.
If $G$ has no rainbow cycle of length 4 and $\delta^c(G)> \frac{n+3k-3}{2}$,
then $G$ contains a rainbow cycle of length at least $k$,
where $k\geq 5$.
\end{theorem}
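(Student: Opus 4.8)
The plan is to run the argument of \cite{W} for Theorem \ref{1} --- which proceeds along the same lines as the proof of our Theorem \ref{0} --- and to insert, at the one decisive step, the corrected vertex count for $P^C$ established in the discussion above. Assume for contradiction that $G$ has no rainbow cycle of length at least $k$, and let $P=u_1u_2\cdots u_p$ be a longest rainbow path in $G$. The hypothesis $\delta^c(G)>\frac{n+3k-3}{2}$ is vacuous unless $n\ge 3k-3$ (otherwise it would force $\delta^c(G)>n$), so we may assume $n\ge 3k-3$; then $\delta^c(G)>3k-3\ge 7$ and Theorem \ref{th7} gives $p\ge\lceil\tfrac{2\delta^c(G)}{3}\rceil+1\ge 2k-1$. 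In fact, for any $x\in V(P^C)$ with $u_1x\in E(G)$ we have $c(u_1x)\in C(P)$ (else $xu_1Pu_p$ is a longer rainbow path), and for $i\ge k$ we have $c(u_1u_i)\in C(u_1Pu_i)\subseteq C(P)$ by Lemma \ref{lem11}; hence $d^c(u_1)\le|C(P)|+(k-2)=p+k-3$, and likewise for $u_p$, so $p\ge d^c(u_1)-k+3>\frac{n+k+3}{2}\ge 2k$, i.e.\ $p\ge 2k+1$. Since the proof of Theorem \ref{1} in \cite{W} imposes no lower bound on $n$ beyond $k\ge 5$, this lower bound on $p$ meets every length threshold that argument needs.

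With $P$ in hand, I would set up the colour bookkeeping exactly as in \cite{W} and in the proof of Theorem \ref{0}: the ``near-endpoint'' colour classes $A_1,A_2,B_1,B_2,B_3$ at the two ends of $P$, and the classes $C_0,C_1,C_2$ recording the colours that $u_1$ and $u_p$ send into $P^C$, split into the ``shared'' part $C_0$ and the ``private'' parts $C_1,C_2$. The place where the $C_4$-free hypothesis enters is the estimate $|N_{C_1}(u_1,P^C)\cap N_{C_2}(u_p,P^C)|\le 1$, which \cite{CKRY} proved for $C_4$-free graphs; it is sharper than the bound $\le 2$ we derived from Claim \ref{c6} in the complete case. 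The intersection estimates among $A_1,B_1,B_2$, and the fact that the colours of $A_1$, of $B_1$, of $B_2\setminus(B_1\cup\{c(u_1u_p)\})$, and of $C_0$ are each forced onto a distinct edge of $P$ (giving a lower bound on $|V(P)|$), transfer from \cite{W} verbatim, since they use only that $G$ has no rainbow cycle of length $\ge k$, that $G$ is $C_4$-free, and that $p$ is large.

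The counting then closes as in \cite{W}: the colour-degree inequalities at $u_1$ and $u_p$ give two lower bounds of the shape $|A_1|+|C_0|+|C_1|+\varepsilon_1\ge\delta^c(G)-(k-3)$ and $|B_1|+|B_2\setminus(B_1\cup\{c(u_1u_p)\})|+|C_0|+|C_2|+\varepsilon_2+\varepsilon_3\ge\delta^c(G)-(k-3)$, as in our \eqref{e0} and \eqref{e1}; the $P$-vertex count gives $|V(P)|\ge|A_1|+|B_1|+|B_2\setminus(B_1\cup\{c(u_1u_p)\})|+|C_0|+\varepsilon_1+\varepsilon_2$ minus a small constant; and --- this is the one place where we depart from \cite{CKRY,W} --- the $P^C$-vertex count is the corrected $|V(P^C)|\ge|C_0|+|C_1|+|C_2|-2$, not their $|C_0|+|C_1|+|C_2|-1$. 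Adding $|V(P)|+|V(P^C)|=n$ and substituting the two colour-degree bounds gives $n\ge 2\bigl(\delta^c(G)-(k-3)\bigr)$ minus a small constant; tracking the constants yields $\delta^c(G)\le\frac{n+3k-3}{2}$, contradicting the hypothesis.

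I expect the only genuinely delicate point --- the one mishandled in \cite{CKRY,W} --- to be the count $|V(P^C)|\ge|C_0|+|C_1|+|C_2|-2$. One must analyse how the colours of $C_0$ are realised by vertices of $P^C$: some $C_0$-colours are carried by a single vertex $x$ on both $u_1x$ and $u_px$, while others require a pair $x,y$ with $c(u_1x)=c(u_py)\in C_0$; the observation is that the $C_4$-free condition permits at most one $C_0$-colour of the ``pair'' type to also feed into $C_1$ or $C_2$ --- if $c(u_1x)\in C_0$, $c(u_px)\in C_2$, $c(u_py)=c(u_1x)$ and $c(u_1y)\in C_1$, then $u_1yu_pxu_1$ is a rainbow $C_4$ (Figure \ref{fig5}) --- and this is exactly what produces the ``$-2$'' rather than the incorrect ``$-1$''. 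A secondary check, which is routine, is that the transferred intersection estimates and $P$-vertex count really do hold for $p\ge 2k+1$ throughout the admissible range $n\ge 3k-3$.
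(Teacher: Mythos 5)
Your proposal is correct and follows essentially the same route as the paper: the paper likewise obtains this statement by rerunning the argument of \cite{W} with the single flawed estimate $|V(P^C)|\geq |C_0|+|C_1|+|C_2|-1$ replaced by the corrected $|V(P^C)|\geq |C_0|+|C_1|+|C_2|-2$, justified by exactly the rainbow-$C_4$ argument you describe (Figure \ref{fig5}), which shifts the color-degree threshold from $\frac{n+3k-2}{2}$ to $\frac{n+3k-3}{2}$. Your additional verification that $p$ is large enough throughout the admissible range of $n$ is a sensible supplement but not a departure from the paper's approach.
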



\begin{thebibliography}{10}

\bibitem{B} J.A. Bondy, U.S.R. Murty, Graph Theory, GTM {\bf 244}, Springer (2008).

\bibitem{CKRY} R. \v{C}ada, A. Kaneko, Z. Ryj\'{a}\v{c}ek, K. Yoshimoto,
Rainbow cycles in edge-colored graphs,
Discrete Math. {\bf 339} (2016), 1387-1392.

\bibitem{CL}
H. Chen, X. Li, Color degree condition for long rainbow paths in edge-colored graphs,
Bull. Malays. Math. Sci. Soc. {\bf 39} (2016), 409-425.

\bibitem{L} H. Li, Rainbow $C_3$'s and $C_4$'s in edge-colored graphs,
Discrete Math. {\bf 313} (2013), 1893-1896.

\bibitem{LW} H. Li, G. Wang, Color degree and heterochromatic cycles in edge-colored graphs,
European J. Combin. {\bf 33} (2012), 1958-1964.

\bibitem{LX} B. Li, C. Xu, S. Zhang, Rainbow triangles in edge-colored graphs, European J. Combin. {\bf 36} (2014), 453-459.

\bibitem{FM} S. Fujita, C. Magnant, Properly colored paths and cycles,
Discrete Appl. Math. {\bf 159} (2011), 1391-1397.

\bibitem{W} W. Tangjai, The minimum color degree and a large rainbow cycle in an edge-colored graph,
arXiv: 1708.04187 [math.CO].

\end{thebibliography}
\end{document}